\newtheorem{lemma}{Lemma}
\newtheorem{definition}{Definition}
\newtheorem{theorem}{Theorem}
\newtheorem{conj}{Conjecture}
\title{The Paired Domination Number of Cubic Graphs}
\author[1]{Bin Sheng\thanks{Email: shengbinhello@nuaa.edu.cn}}
\affil[1]{College of Computer Science and Technology, Nanjing University of Aeronautics and Astronautics, Collaborative Innovation Center of Novel Software Technology and Industrialization, Nanjing, Jiangsu, 211106, PR China}
\author[2]{Changhong Lu\thanks{Email: chlu@math.ecnu.edu.cn}}
\affil[2]{Department of Mathematics, Shanghai Key Laboratory of PMMP, East China Normal University, Shanghai, China}
\begin{document}

\maketitle

\begin{abstract}

Let $G$ be a simple undirected graph with no isolated vertex. A \textit{paired dominating set} of $G$ is a dominating set which induces a subgraph that has a perfect matching. The paired domination number of $G$, denoted by $\gamma_{pr}(G)$, is the size of its smallest paired dominating set.

Goddard and Henning conjectured that $\gamma_{pr}(G)\leq 4n/7$ holds for every graph $G$ with $\delta(G)\geq 3$, except the Petersen Graph. In this paper, we prove this conjecture for cubic graphs.  

\end{abstract}
\section{Introduction}

Dominating set is one of the most classic problems in graph theory.  Many variants of it have been studied due to its wide applications. There are excellent books for this topic, like~\cite{haynes2017domination,DBLP:books/daglib/0093827}. In this paper, we study the paired domination problem.

Hayes and Slater  raised the notion of  paired domination in~\cite{DBLP:journals/networks/HaynesS98}, as a  model for the problem of assigning security guards that can protect each other. It has been studied from many perspectives~\cite{DBLP:journals/ipl/ChenLZ09,DBLP:journals/dam/ChengKN07,DBLP:journals/gc/EgawaFT13,DBLP:journals/gc/FavaronH04,DBLP:journals/jco/HenningP05}.

Computing the paired-domination number of a graph has been shown to be NP-complete~\cite{DBLP:journals/networks/HaynesS98}.
Chen et al.~\cite{DBLP:journals/jco/ChenLZ10} further proved that it is NP-complete for bipartite graphs, chordal graphs, and even for split graphs. Thus obtaining tight upper bounds on the paired-domination number of a graph is an interesting problem.

For general graphs, Haynes and Slater~\cite{DBLP:journals/networks/HaynesS98} bound the paired-domination number with respect to the number of vertices in the graph.
\begin{theorem}(\cite{DBLP:journals/networks/HaynesS98})\label{thm1}
If $G$ is a connected graph with $n(\geq 3)$ vertices, then $\gamma_{pr}\leq n-1$, the equality holds if and only if $G$ is $C_3,C_5$ or a subdivided star.
\end{theorem}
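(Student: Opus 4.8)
The plan is to reduce the entire statement to the case of trees and then to control the extra edges that a connected graph may carry on top of a spanning tree. The starting observation is that for every spanning tree $T$ of $G$ one has $\gamma_{pr}(G)\le \gamma_{pr}(T)$: a paired dominating set $S$ of $T$ still dominates $G$ (edges of $T$ are edges of $G$), and a perfect matching of $T[S]$ uses only edges of $T\subseteq G$, so it is also a perfect matching of $G[S]$. Hence it suffices to prove that every tree $T$ on $n\ge 3$ vertices satisfies $\gamma_{pr}(T)\le n-1$, with equality exactly for subdivided stars, and then to recover the general characterization from the structure of the spanning trees of $G$.

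For the tree bound I would argue by induction on $n$. Take a longest path and let $u$ be the support vertex of one of its endpoints; then every neighbour of $u$ except the next path vertex $w$ is a leaf. If deleting $u$ together with all $p\ge 1$ of its leaves leaves a tree $T'$ on at least three vertices, put $S=S'\cup\{u,\ell\}$, where $\ell$ is one leaf at $u$ and $S'$ is a paired dominating set of $T'$ given by induction; the edge $u\ell$ pairs the two new vertices while $u$ dominates $w$ and all leaves at $u$, so $\gamma_{pr}(T)\le \gamma_{pr}(T')+2\le (n-p-2)+2=n-p\le n-1$. The small residual configurations (stars and short paths such as $P_4$) are checked directly. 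Tracking when the inequalities are tight shows that equality forces $p=1$, forces $T'$ to be a subdivided star, and forces $w$ to be its centre, so that $T$ itself is a subdivided star; this tight bookkeeping is the delicate point of the tree case.

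It remains to characterize equality for arbitrary connected $G$. The ``if'' direction is a direct computation: $\gamma_{pr}(C_3)=2$, $\gamma_{pr}(C_5)=4$, and $\gamma_{pr}$ of the subdivided star $S(K_{1,k})$ equals $2k=n-1$. For the converse, if $\gamma_{pr}(G)=n-1$ then $n-1\le \gamma_{pr}(T)\le n-1$ for every spanning tree $T$, so by the tree case every spanning tree of $G$ is a subdivided star. If $G$ is a tree we are done; otherwise $G$ has a non-tree edge $e=xy$, and for every tree edge $f$ on the fundamental cycle of $e$ the graph $T-f+e$ is again a spanning tree, hence again a subdivided star. Running through the admissible (i.e.\ $T$-nonadjacent) endpoint types of $e$ --- leaf--leaf, leaf--middle, leaf--centre, and middle--middle --- shows that whenever the number of legs $k$ is at least $3$, a suitable exchange $T-f+e$ produces a tree with a leg of length other than $2$, or with two vertices of degree at least $3$, contradicting that it is a subdivided star; hence $k\le 2$, the spanning tree is $P_3$ or $P_5$, and $n\in\{3,5\}$. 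A short separate inspection of connected graphs on $3$ and $5$ vertices that contain a cycle then isolates exactly $C_3$ and $C_5$.

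I expect the main obstacle to be this final structural analysis, namely forcing a connected graph all of whose spanning trees are subdivided stars to be $C_3$, $C_5$, or a subdivided star, since it requires ruling out every non-tree edge by the edge-exchange argument above; the secondary difficulty is the equality bookkeeping in the inductive tree argument, where one must verify that any deviation from the subdivided-star shape (an extra leaf at $u$, or $w$ not being the centre) strictly improves the bound.
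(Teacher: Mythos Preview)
The paper does not prove Theorem~\ref{thm1}; it is quoted as a background result from Haynes and Slater and carries only the citation, with no argument supplied. There is therefore no in-paper proof to compare your proposal against.

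As an independent assessment: your spanning-tree reduction $\gamma_{pr}(G)\le\gamma_{pr}(T)$ is correct and cleanly isolates the tree case. The inductive bound for trees is sound, and you have correctly located the two sensitive spots. One point to tighten in the tree equality analysis: when $p=1$ and $T'$ is a subdivided star but $w$ is \emph{not} its centre, your inductive inequality $\gamma_{pr}(T)\le\gamma_{pr}(T')+2$ only gives $\gamma_{pr}(T)\le n-1$, not a strict inequality; you will need a direct construction of a smaller paired dominating set of $T$ in that configuration (or a more careful choice of which endpoint of the longest path to peel off) to force $\gamma_{pr}(T)<n-1$. In the edge-exchange step for the non-tree case, note that some single exchanges do return a subdivided star (for instance a leaf--centre chord with $f$ the incident centre--middle edge), so the choice of $f$ genuinely matters; a working $f$ does exist in every case, but your write-up should exhibit it rather than assert it.
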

If we require that $\delta(G)\geq 2$, then the following theorem improves the result in Theorem~\ref{thm1}.
\begin{theorem}(\cite{DBLP:journals/networks/HaynesS98})\label{thm2}
If $G$ is a connected graph with $n(\geq 6)$ vertices and $\delta(G)\geq 2$, then $\gamma_{pr}(G)\leq 2n/3$.
\end{theorem}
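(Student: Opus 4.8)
The plan is to prove the inequality by induction on $n$, after first reducing to a sparse spanning subgraph. The starting observation is that $\gamma_{pr}(G)\le\gamma_{pr}(H)$ for every spanning subgraph $H$ of $G$ with $\delta(H)\ge 1$: a paired dominating set $S$ of $H$ still dominates $V(G)=V(H)$ in $G$, and a perfect matching of $H[S]$ remains a matching of $G[S]$. Hence it suffices to produce one connected spanning subgraph $H\subseteq G$ with $\delta(H)\ge 2$ and $\gamma_{pr}(H)\le 2n/3$. I take $H$ to be \emph{edge-minimal} subject to being connected with $\delta\ge 2$. This forces a rigid local structure: deleting any non-bridge edge would drop some vertex to degree $1$, so every non-bridge edge of $H$ has an endpoint of degree exactly $2$; in particular two vertices of degree $\ge 3$ are adjacent only along a bridge, $H$ always has a vertex of degree $2$ (so the case $\delta(G)\ge 3$ is absorbed into this setting), and $H$ is built from cycles, bridges, and ``bare paths'' of degree-$2$ vertices arranged tree-like.

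For the inductive step, if $H$ is a single cycle $C_n$ then selecting two consecutive vertices out of every four gives $\gamma_{pr}(C_n)\le 2n/3$ for all $n\ge 6$. Otherwise $H$ has a pendant piece; the model case is a leaf cycle $B=C_\ell$ attached to the rest of $H$ at a single cut-vertex $c$. Let $y_1,\dots,y_{\ell-1}$ be the private (degree-$2$) vertices of $B$, listed around $B$ starting from $c$, and put $H'=H-\{y_1,\dots,y_{\ell-1}\}$. In the typical case $c$ loses exactly its two neighbours in $B$ and keeps degree $\ge 2$, so $H'$ is connected with $\delta(H')\ge 2$ on $n-(\ell-1)$ vertices and, by induction, has a paired dominating set $S'$ with $|S'|\le\tfrac23(n-\ell+1)$. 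One then dominates the $y_i$ by adding to $S'$ a family $D$ of pairwise disjoint edges chosen inside the path $y_1y_2\cdots y_{\ell-1}$ (a period-$3$ pattern tuned to $\ell\bmod 3$, using that $\gamma_{pr}$ is even), with $|D|\le\tfrac23(\ell-1)$; then $S'\cup D$ is a paired dominating set of $H$ of size at most $\tfrac23 n$. A long bare path is first shortened three vertices at a time by a standard reduction lowering $\gamma_{pr}$ by exactly $2$, so one may assume every bare path is short, and a pendant piece hanging off a bridge or bare path is peeled off together with that bridge or path and treated the same way.

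The main difficulty is that the $2n/3$ budget is essentially tight on small leaf pieces: re-dominating the $\ell-1$ private vertices of a leaf cycle $C_\ell$ by disjoint edges costs $2\lceil(\ell-1)/4\rceil$, which exceeds $\tfrac23(\ell-1)$ exactly for $\ell\in\{3,6\}$, leaving one short by $\tfrac23$. The gap closes precisely when the reattachment vertex $c$ lies in the paired dominating set produced for $H'$, which is not automatic, so the induction must be carried out on a strengthened statement --- for instance ``for every vertex $v$ there is a paired dominating set of size $\le 2n/3$ with $v\in S$'' --- proved simultaneously, supported by exchange arguments (if $c$ is the cut-vertex of a triangle leaf block and $c\notin S$, then dominating $y_1,y_2$ forces $\{y_1,y_2\}\subseteq S$, and $y_1$ may be swapped for $c$ with $y_2$ rematched to $c$). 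Further care is needed for: a pendant piece whose removal would drop $c$ to degree $1$, where the removed set must be extended along the adjoining bridge; the cycle $C_5$, the unique small connected graph with $\delta\ge 2$ violating the inequality, which the peeling must avoid producing as $H'$; and graphs with no long bare path whose degree-$2$-suppressed multigraph has minimum degree $\ge 3$, where local peeling is too costly and one instead exhibits a $\tfrac23 n$-sized paired dominating set by a more global construction. Pinning down the structure of the minimal $H$, settling the base cases $6\le n\le 8$ (only $\gamma_{pr}\le 4$ is needed, as $\gamma_{pr}$ is even), and clearing this short list of boundary situations is the real content; the generic inductive step is by comparison routine.
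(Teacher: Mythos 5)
The paper does not actually prove this statement; Theorem~\ref{thm2} is quoted from Haynes and Slater \cite{DBLP:journals/networks/HaynesS98}, so your argument has to stand on its own, and as written it does not. The central gap is structural: after passing to an edge-minimal connected spanning subgraph $H$ with $\delta(H)\ge 2$, it is \emph{not} true that $H$ is ``built from cycles, bridges, and bare paths arranged tree-like,'' and consequently ``otherwise $H$ has a pendant piece'' is false. A theta graph with all three paths of length at least two, and more generally any subdivision of a multigraph of minimum degree $\ge 3$ in which every edge is subdivided, is edge-minimal and $2$-connected, so its block tree is a single block that is not a cycle: there is no leaf cycle, bridge, or pendant piece to peel, and your induction never gets started. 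You do mention this family at the end (``degree-$2$-suppressed multigraph has minimum degree $\ge 3$'') but dispose of it with ``a more global construction'' that is never exhibited; this is not a boundary situation but a main case, arguably the core of the theorem, and leaving it blank means the proof is a plan rather than an argument.

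The second genuine gap is the mechanism you invoke to absorb the $2/3$ deficit in the tight leaf blocks $\ell\in\{3,6\}$: the strengthened hypothesis (``for every vertex $v$ there is a paired dominating set of size $\le 2n/3$ containing $v$'') is only proposed ``for instance,'' never verified, and the exchange argument is sketched only for triangle leaf blocks, not for $C_6$ leaves or for the configurations where the attachment vertex $c$ sits on a bridge or short bare path. Moreover the inductive step needs $H'$ to be connected, to satisfy $\delta(H')\ge 2$ (which fails outright when $c$ has degree $3$ with one edge leaving the leaf block), to have at least $6$ vertices, and to avoid $C_5$; each of these is acknowledged but none is discharged, and the base cases $6\le n\le 8$ (which require checking $\gamma_{pr}\le 4$ for \emph{all} connected graphs with $\delta\ge 2$ of those orders, not just cycles) are likewise left open. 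Until the $2$-connected non-cycle case is handled and the strengthened hypothesis is stated precisely and proved through all the peeling cases, the proposal cannot be accepted as a proof of Theorem~\ref{thm2}.
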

The result in Theorem~\ref{thm2} can be further strengthened if we increase the size bound of the graph.

\begin{theorem}(\cite{DBLP:journals/jco/Henning07})\label{thm3}
If $G$ is a connected graph with $n(\geq 10)$ vertices and $\delta(G)\geq 2$, then $\gamma_{pr}\leq 2(n-1)/3$. The equality holds for infinitely many graphs.
\end{theorem}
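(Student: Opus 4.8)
My first move is to notice that the bound is really only a statement about orders divisible by $3$. Since a perfect matching of $G[S]$ forces $|S|$ to be even, $\gamma_{pr}(G)$ is always even. Write $n=3k+r$ with $r\in\{0,1,2\}$. If $r\in\{1,2\}$ then $\tfrac{2n}{3}<2k+2$, so by Theorem~\ref{thm2} together with the parity of $\gamma_{pr}$ we already have $\gamma_{pr}(G)\le 2k$, and since $2k\le\tfrac{2(n-1)}{3}$ in both of these cases there is nothing to prove. So it remains to handle $n=3k\ge 12$, where the target $\tfrac{2(n-1)}{3}$ rounds (through parity) down to $2k-2$; in other words I must improve the generic bound $\gamma_{pr}(G)\le\tfrac{2n}{3}$ of Theorem~\ref{thm2} by exactly $2$.

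For that I would run an induction on $n$ (in steps of $3$, or $6$ if that buys more freedom), driven by a short list of \emph{reducible configurations}. A configuration is reducible if it produces a connected graph $G'$ with $\delta(G')\ge 2$ and $|V(G')|=n-3$, together with a rule turning any paired dominating set $S'$ of $G'$ into a paired dominating set of $G$ by adjoining at most two new vertices; these two vertices are to be chosen mutually adjacent, so that the perfect matching on $G[S']$ (which persists because $G'\subseteq G$) extends. Given such a reduction, $\gamma_{pr}(G)\le\gamma_{pr}(G')+2\le(\tfrac{2(n-3)}{3}-2)+2=\tfrac{2n}{3}-2$ by the induction hypothesis, so the constant $-2$ is merely carried along and only needs a valid base. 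The configurations I expect to need are all built around low-degree vertices — a vertex of degree $\ge 3$ only helps domination, so the extremal behaviour must concentrate on degree-$2$ vertices: a short induced path all of whose internal vertices have degree $2$ (shorten it, with care when the resulting edge would be a loop or a multi-edge), a triangle containing a degree-$2$ vertex, a degree-$2$ vertex whose two neighbours are adjacent or share a neighbour, and so on. The crux is then the structural claim that a connected graph with $\delta\ge 2$ and at least $12$ vertices containing \emph{none} of these configurations either does not exist, or is so rigid (essentially a long cycle with a bounded number of chords) that a paired dominating set of size $\tfrac{2n}{3}-2$ can be exhibited by hand.

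I expect two things to be the real obstacle. First, the perfect-matching requirement makes the ``extend by two vertices'' clause of each reduction delicate: dominating the three deleted vertices cheaply is not enough — the two added vertices must be adjacent to each other and compatible with the matching already present in $S'$, and near the ends of a shortened path or at a reduced triangle the right choice is sometimes to recycle a vertex of $S'$ rather than add a fresh one, which shifts the count. Second — and this is where the hypothesis on $n$ really bites — because we cannot afford to lose even a single unit, the genuine base cases (the smallest irreducible graphs, and every degenerate reduction where deleting three vertices would disconnect $G$, drop the minimum degree below $2$, or push the order below $12$) must be settled exactly; this is also where the infinite sharpness family, necessarily of order $\equiv 1\pmod 3$, shows up at the boundary of the argument, and I would construct that family (corona- or subdivision-type graphs) separately. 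Accordingly I would put most of the effort into (a) a clean catalogue of reducible configurations with their extension rules and (b) the structural lemma classifying the irreducible graphs, after which the induction itself is a one-line accounting.
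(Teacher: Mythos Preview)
The paper does not prove Theorem~\ref{thm3} at all: it is quoted in the introduction as a known result from \cite{DBLP:journals/jco/Henning07}, with the citation attached and no argument given. It sits alongside Theorems~\ref{thm1}, \ref{thm2} and \ref{thm4} purely as background leading up to Conjecture~2, which is the actual object of the paper. So there is no ``paper's own proof'' for me to compare your proposal against.

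On the proposal itself: your parity reduction is sound --- since $\gamma_{pr}(G)$ is always even, Theorem~\ref{thm2} already yields the bound $2(n-1)/3$ whenever $n\not\equiv 0\pmod 3$, and the only genuine content is improving $2n/3$ to $2n/3-2$ when $3\mid n$. But from that point on what you have written is a plan, not a proof. The phrase ``a short list of reducible configurations'' followed by ``a degree-$2$ vertex whose two neighbours are adjacent or share a neighbour, and so on'' is exactly where the work lies, and you have not supplied it; likewise ``so rigid \ldots\ that a paired dominating set of size $2n/3-2$ can be exhibited by hand'' is an assertion, not an argument. Henning's original proof of this theorem (in the cited paper) is a substantial case analysis, and your outline does not yet contain the structural lemma you yourself identify as the crux. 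If you intend to reconstruct the proof, you will need to actually write down the catalogue of configurations, verify for each one that the two added vertices can be chosen adjacent \emph{and} that the reduced graph stays connected with $\delta\ge 2$ and $n\ge 10$, and handle the base cases explicitly.
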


Graph with at least 14 vertices and minimum degree at least 2 are also considered in~\cite{DBLP:journals/jco/Henning07}.

There is also a series of results for graphs with $\delta(G)\geq 3$.

\begin{theorem}(\cite{DBLP:journals/appml/ChenSC08})\label{thm4}
If $G$ is a cubic graph with $n$ vertices, then $\gamma_{pr}(G)\leq 3n/5$.
\end{theorem}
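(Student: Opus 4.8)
The plan is to prove, by strong induction on $n$, that every cubic graph on $n$ vertices (treating components separately, so that we may assume $G$ connected) has a paired dominating set of size at most $\tfrac{3}{5}n$. In the inductive step one locates a bounded \emph{reducible configuration}, deletes a vertex set $S$ contained in it, restores $3$-regularity of the remainder by local surgery — matching up, or suppressing the short paths through, the few resulting vertices of degree less than $3$, possibly splitting it into several smaller cubic graphs — applies the induction hypothesis to obtain a paired dominating set $D'$ of size at most $\tfrac{3}{5}(n-|S|)$, and finally extends $D'$ to a paired dominating set of $G$ by adjoining a few vertices in or near $S$. The arithmetic is transparent: if $|S|\ge 4$ and the vertices of $S$ can all be dominated by the two endpoints of a single edge, then $\gamma_{pr}(G)\le \tfrac{3}{5}(n-|S|)+2\le \tfrac{3}{5}n-\tfrac{2}{5}$; more generally one only needs $\tfrac{3}{5}|S|$ to cover the number of vertices adjoined to dominate $S$ and keep $D'$ paired. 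A cautionary point is that this slack, $\tfrac{2}{5}$ per deleted $N[v]$, is less than one vertex, so any domination destroyed by the surgery on $G-S$ must be repaired by a \emph{local modification} of $D'$ rather than by enlarging it (or else one must delete a slightly larger configuration to buy more slack). The finitely many cubic graphs containing no reducible configuration — all small — are checked directly; note that, unlike for the $\tfrac{4}{7}n$ bound, the Petersen graph is \emph{not} an exception, since $\gamma_{pr}(\mathrm{Petersen})=6=\tfrac{3}{5}\cdot 10$.

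The substance of the proof lies in the catalogue of reducible configurations and in a discharging-type argument showing that every large enough cubic graph contains one. The cleanest instance is a vertex $v$ with $G[N[v]]$ edgeless: then $|N[v]|=4$, any edge $vu$ with $u\in N(v)$ dominates $N[v]$, and one takes $S=N[v]$. When $G[N[v]]$ contains edges one argues by the local structure at $v$ — a single triangle through $v$, a diamond ($K_4$ minus an edge) containing $v$, two triangles meeting at $v$, and so on — each with its own deletion set and dominating edge (or pair of edges). Cubic graphs possessing a bridge, which need not even have a perfect matching, are split at the bridge and one side is handled inductively, the degree-$2$ endpoint being absorbed by the surgery; the case of other cut vertices is similar. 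Separately, for the bridgeless case a $2$-factor $F=C_1\cup\dots\cup C_k$ with complementary perfect matching $M_0$ provides a natural alternative starting point: paired-dominating a cycle $C_\ell$ internally costs $2\lceil \ell/4\rceil$, which is $\le \tfrac{3}{5}\ell$ except for $C_3,C_5,C_6,C_9,C_{13}$, so the long cycles are dealt with separately while each short cycle is folded into an adjacent cycle along an $M_0$-edge (using two such edges where available, so that both cycles are covered comfortably below the $\tfrac{3}{5}$ ratio).

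I expect the main obstacle to be this exhaustiveness step together with the surgery bookkeeping — not a single deep idea, but pinning down enough reducible configurations that no large cubic graph escapes, and, in each case, verifying that re-cubifying $G-S$ (and correctly splitting it into cubic components) does not silently cost more than the meagre available slack. In the $2$-factor route the analogous difficulty is the global accounting: one must show that the slack accumulated on the long cycles pays for the excess of the short ones, and that short cycles are never too isolated in the auxiliary ``cycle graph'' on $\{C_1,\dots,C_k\}$ for the folding to be possible; this is delicate enough to explain why the theorem, though not hard, genuinely requires care — and it is precisely this folding mechanism that realizes $\gamma_{pr}(\mathrm{Petersen})=6$, which is reassuring.
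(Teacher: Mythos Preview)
The paper does not prove Theorem~\ref{thm4}; it is quoted from \cite{DBLP:journals/appml/ChenSC08} as background, so there is no in-paper argument to compare yours against.

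That said, your proposal is a sketch of two strategies rather than a proof, and the first one has a structural gap that you have not closed. The step ``restore $3$-regularity of the remainder by local surgery'' is doing all the work and is exactly where such inductions break. When you delete $S=N[v]$ from a cubic graph, the six (or fewer) neighbours of $S$ in $G-S$ drop to degree $\le 2$; to re-cubify you must add artificial edges or identify vertices. A minimum PDS $D'$ of the surgically modified graph may then use one of those artificial edges \emph{in its perfect matching}. Back in $G$ that edge does not exist, so $D'$ is no longer paired, and your slack of at most $\tfrac{2}{5}$ is not enough to buy even one extra vertex to repair this. You acknowledge the issue (``must be repaired by a local modification of $D'$'') but give no mechanism; this is not bookkeeping, it is the crux. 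The same difficulty recurs for every configuration in your (uncatalogued) list.

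The $2$-factor route is more promising in spirit---it is closer to how these bounds are actually proved---but as written it is also only a plan. You correctly identify the bad cycle lengths $3,5,6,9,13$, yet ``folding a short cycle into an adjacent cycle along an $M_0$-edge'' needs a precise statement and proof that the combined budget still meets $\tfrac{3}{5}$ of the combined length, together with an argument that short cycles can always be matched to distinct long partners (or to each other with enough gain). A cubic graph can have a $2$-factor consisting entirely of $C_5$'s (the Petersen graph itself is the simplest example), so the ``long cycles pay for short cycles'' accounting cannot be purely local and must exploit the $M_0$-edges between cycles; you have asserted this works but not shown why.
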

In~\cite{DBLP:journals/appml/ChenSC08}, the authors made the following conjecture.

\begin{conj}
Let $G$ be a connected graph with $n(\geq 11)$ vertices and $\delta(G)\geq 3$, then $\gamma_{pr}(G)\leq 4n/7$.
\end{conj}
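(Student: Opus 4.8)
The plan is to deduce the general bound from an analysis of graphs that are ``as cubic as possible,'' using the monotonicity of paired domination under edge deletion. The basic observation is that if $H$ is a spanning subgraph of $G$ with no isolated vertex, then any paired dominating set of $H$ is also a paired dominating set of $G$ (both domination and the existence of a perfect matching in the induced subgraph are preserved when edges are added back), so $\gamma_{pr}(G)\le\gamma_{pr}(H)$. I am therefore free to delete edges as long as I keep the graph connected with $\delta\ge 3$. First I would pass to a connected spanning subgraph $H\subseteq G$ with $\delta(H)\ge 3$ having the fewest edges. In such an $H$ every non-bridge edge has an endpoint of degree exactly $3$ (otherwise it could be deleted while keeping $\delta\ge 3$ and connectivity), so the set $A$ of vertices of degree at least $4$ is ``sparse'': its members are pairwise non-adjacent except possibly across bridges, and every vertex outside $A$ has degree exactly $3$. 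Proving $\gamma_{pr}(H)\le 4n/7$ for this near-cubic normal form suffices.

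The reason to expect the extremal instances to be cubic --- and hence the role of the hypothesis $n\ge 11$, which excludes the Petersen graph with its $\gamma_{pr}=6>40/7$ --- is that a high-degree vertex is cheap to dominate from. If $a\in A$ has degree $d\ge 4$ and all of its neighbours have degree $3$, then $a$ together with one neighbour dominates $a$ and all of its $d\ge 4$ neighbours at a local cost $2/(d+1)\le 2/5<4/7$. So the difficulty is concentrated in the cubic skeleton. I would prove the bound by induction on $n$ through a catalogue of reducible configurations: for each, delete a bounded vertex set $S$, dominate it by a paired set $P$ with $|P|\le\frac47|S|$, repair the residual graph on $V(H)\setminus S$ so that the induction hypothesis applies, take a paired dominating set $Q$ of the residual by induction, and output $P\cup Q$. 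Provided the repair only adds edges of $G$, the set $Q$ still dominates $V(H)\setminus S$ in $G$ and $G[P\cup Q]$ still has a perfect matching, so $\gamma_{pr}(G)\le|P|+|Q|\le\frac47|S|+\frac47(n-|S|)=\frac47 n$. The configuration handling a vertex of $A$ and its neighbourhood is comfortably within budget by the ratio above, so the hard configurations are the purely cubic ones.

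The base cases are the finitely many graphs with $11\le n\le n_0$ and, essentially, the cubic graphs, which carry the real combinatorial content: one must cover a connected cubic graph by paired gadgets that dominate seven vertices with four, while handling short cycles, near-bipartite regions, and local structures resembling the Petersen graph. I anticipate two obstacles. The first, and the one on which the induction genuinely turns, is the repair step: deleting $S$ lowers the degrees of the vertices bordering $S$, and the edges needed to lift them back to degree $3$ need not be present in $G$, so monotonicity cannot be invoked naively. Overcoming this requires either choosing configurations whose removal automatically leaves $\delta\ge 3$ on the boundary, or replacing deletion by a local identification of boundary vertices that the paired structure can absorb, all the while keeping the residual connected, of order at least $11$, and different from the Petersen graph. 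The second obstacle is that the cubic core is itself the crux: it is precisely the statement one must establish by a dedicated discharging or gadget-covering argument, and the whole reduction is only worthwhile because it shows that any configuration meeting $A$ strictly improves the ratio, forcing the extremal analysis to collapse onto the cubic case. A cleaner alternative that sidesteps the repair difficulty is a global discharging argument: assign each vertex a charge $4/7$, build a paired dominating set by greedily selecting edges that privately dominate as many new vertices as possible (preferring edges incident to $A$), and redistribute charge so that every selected vertex is paid for; here the entire difficulty migrates into the endgame, when few undominated vertices remain and one is forced to add pairs that dominate too little --- exactly the situation realised by the Petersen graph.
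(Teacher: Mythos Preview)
The statement you are attempting is Conjecture~1, which the paper does \emph{not} prove; it is listed as an open problem. The paper establishes only the cubic special case (its Theorem~7), by fixing a minimum paired dominating set $S$ satisfying the extremal conditions P1--P2, assigning weights to the edges of $[S,V\setminus S]$, and carrying out an exhaustive case analysis of the components of $G[S]$ to show that each pair carries average weight at least $3/2$. There is therefore no proof of the full conjecture in the paper to compare your proposal against.

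As for the proposal itself: the monotonicity step $\gamma_{pr}(G)\le\gamma_{pr}(H)$ for a connected spanning $H\subseteq G$ with $\delta(H)\ge 3$ is correct, and so is the observation that an edge-minimal such $H$ is near-cubic. But from there the argument is only a strategy, with the gaps you yourself flag left unresolved. The inductive removal of a high-degree configuration is not executed: the repair problem (restoring $\delta\ge 3$ on the boundary using only edges of $G$) is acknowledged but not solved, the local ratio $2/(d+1)$ is a heuristic rather than a bound (the gadgets may overlap, and the residual must also stay connected, of order $\ge 11$, and distinct from the Petersen graph), and the discharging alternative is likewise only sketched. Crucially, your plan bottoms out at the cubic case without proving it --- and that case is exactly the paper's main theorem, obtained by a method (weight functions plus component-by-component structural analysis) quite different from the induction you outline. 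Even taking the cubic result as a black box, the reduction from general $\delta\ge 3$ to cubic is not completed, so the proposal does not prove Conjecture~1.
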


Goddard and Henning improved the result in Theorem~\ref{thm4}, and made a strengthened conjecture in~\cite{DBLP:journals/gc/GoddardH09}.
\begin{theorem}(\cite{DBLP:journals/gc/GoddardH09})
If $G$ is a connected cubic graph, then $\gamma_{pr}(G)\leq 3n/5$. Moreover, the equality holds if and only if $G$ is the Petersen Graph.
\end{theorem}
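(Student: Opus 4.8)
The plan is to prove the bound $\gamma_{pr}(G)\le 3n/5$ by induction on $n$, staying inside the class of connected cubic graphs, and to read off the equality characterization from the same induction. (One could instead simply quote Theorem~\ref{thm4} for the inequality and focus on the equality case, but carrying out the induction afresh makes it easier to see where slack is created.) The inductive hypothesis is that every connected cubic graph $H$ with $|V(H)|<n$ satisfies $\gamma_{pr}(H)\le 3|V(H)|/5$, with equality only for the Petersen graph. The base cases are the finitely many connected cubic graphs on at most $10$ vertices — $K_4$, $K_{3,3}$, the $3$-prism, the $3$-cube, the remaining cubic graphs on eight vertices, and the Petersen graph — for which one checks directly that the bound holds and that only the Petersen graph attains $3n/5=6$. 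Since deleting vertices from a cubic graph destroys $3$-regularity, it is convenient to let the induction range over connected cubic \emph{multigraphs} (allowing parallel edges), so that the reductions below always land back in the class.

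For the inductive step one looks for a \emph{reducible configuration}: a small vertex set $X$ that is dominated by one matched pair (or by two matched pairs), together with a way to excise $X$ and patch the boundary by a local gadget so that the resulting smaller cubic multigraph $G'$ is still connected, and so that any paired dominating set $S'$ of $G'$ extends to one of $G$ of size $|S'|+2$ (resp.\ $|S'|+4$). The arithmetic to keep in mind is that deleting $k$ vertices while re-inserting a single matched pair preserves the bound as soon as $k\ge 4$ (since $2\le 3k/5$), and deleting $5$ vertices while re-inserting two pairs is exactly tight. Natural configurations to mine are short cycles — a triangle, a $4$-cycle — and pairs of adjacent vertices $u,v$ with $|N(u)\cup N(v)|$ small, and more generally any induced subgraph on few vertices that is dominated by two adjacent vertices and has a small edge-boundary to the rest of $G$. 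The delicate point in each reduction is not the domination count but the \emph{pairing/boundary bookkeeping}: one must arrange that the re-inserted pair is genuinely matched in $G[S]$ and that every vertex whose domination was being supplied ``through'' the gadget in $G'$ is still dominated in $G$, while avoiding newly created parallel edges or disconnections; this is where the case analysis proliferates.

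The main obstacle, and the core of the argument, is the case in which $G$ admits no reducible configuration at all. One first shows that such a $G$ must have girth at least $5$ (triangles and $4$-cycles being reducible) and, more strongly, that it contains no vertex with a ``small'' second neighbourhood and no short structure with small edge-boundary. The goal is to combine all these local restrictions to force $G$ to be the Petersen graph. Here one uses that in a cubic graph of girth $\ge 5$ a matched pair $uv$ dominates exactly $|N[u]\cup N[v]|=6$ vertices with no overlap, so a careful double count of how few pairs can suffice, together with the rigidity of girth-$5$ cubic graphs (the Petersen graph being the unique $(3,5)$-cage), should pin down the extremal configuration; any connected cubic graph that is irreducible but not the Petersen graph must then still satisfy the \emph{strict} inequality $\gamma_{pr}(G)<3n/5$. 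I expect this step — turning ``no small reducible substructure'' into ``exactly the Petersen graph'' — to be the hard part.

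The equality characterization then falls out: since every reduction is set up to lose no slack, a counterexample to $\gamma_{pr}(G)\le 3n/5$, or a graph attaining equality other than the Petersen graph, would have to be irreducible, and the analysis of the irreducible case leaves only the Petersen graph. For it one checks directly that $\gamma_{pr}=6=3\cdot 10/5$: no four vertices forming two disjoint edges can dominate all ten vertices (the complement of the closed neighbourhood of an edge is a $2K_2$ that no single edge's closed neighbourhood contains), while six vertices plainly can. This closes the induction and proves the theorem.
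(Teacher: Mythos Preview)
The paper does not prove this theorem at all: it is quoted as a known result of Goddard and Henning \cite{DBLP:journals/gc/GoddardH09}, stated without proof, and used only as background motivation for Conjecture~2. So there is no ``paper's own proof'' to compare your proposal against.

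As for your proposal itself, it is a plan rather than a proof, and you say so explicitly: the crux --- showing that a connected cubic graph with no reducible configuration must be the Petersen graph --- is left entirely open (``I expect this step \dots\ to be the hard part''). Several of the earlier steps are also not carried out: you do not exhibit any concrete reducible configuration together with its boundary gadget, you do not verify that the reductions keep the graph connected (a genuine hazard once you allow parallel edges), and the base-case enumeration is asserted rather than done. The arithmetic observation that excising $k\ge 4$ vertices and re-inserting one matched pair preserves the bound is fine, but nothing in the proposal guarantees that such an excision is always available unless $G$ is Petersen. In short, what you have written is a reasonable outline of one standard strategy, but it is not yet a proof and cannot be assessed as correct or incorrect until the reductions and the irreducible-case analysis are actually supplied.
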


\begin{conj}
Let $G$ be a connected graph with $n$ vertices and $\delta(G)\geq 3$. If $G$ is not the Petersen Graph, then $\gamma_{pr}(G)\leq 4n/7$.
\end{conj}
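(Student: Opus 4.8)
The plan is to prove the statement in two stages: a \emph{core} estimate for cubic graphs, and a \emph{reduction} carrying the general minimum-degree-three case down to it. For the core I would show that every connected cubic graph other than the Petersen graph has $\gamma_{pr}(G)\le 4n/7$, arguing by induction on $n$ with a discharging/local-configuration analysis. The idea is to compile a finite list of local structures (short induced paths, triangles, small edge-cuts, saturated neighbourhoods) near which one can seat a constant-size paired set that accounts for seven vertices of $G$ for every four vertices it uses, then delete the settled region and recurse on a smaller cubic graph; the Petersen graph is arranged precisely so that every such local move fails, which is why it is the unique exception. Certifying that it is the \emph{only} irreducible configuration is the delicate heart of the core.

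For the reduction I would exploit that $\gamma_{pr}$ cannot decrease when edges are deleted: if $H$ is a spanning subgraph of $G$ with $\delta(H)\ge 3$, then any paired dominating set $S$ of $H$ is also one of $G$, since domination only improves under added edges and a perfect matching of $H[S]$ persists in $G[S]$; hence $\gamma_{pr}(G)\le\gamma_{pr}(H)$. It therefore suffices to locate inside $G$ a spanning subgraph of paired domination number at most $4n/7$. When $n$ is even and $G$ carries a $3$-factor $H$, the components of $H$ are connected cubic graphs, and applying the core bound to each (first using an edge of $G$ to destroy any Petersen component, which exists because $G$ is connected and $G\neq$ Petersen) yields $\gamma_{pr}(G)\le\gamma_{pr}(H)\le 4n/7$.

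The genuine obstacle is the family of graphs with no $3$-factor, which by Tutte's $f$-factor theorem includes every graph of odd order (there $3n$ is odd, so no cubic spanning subgraph can exist) along with sporadic even-order exceptions. Here I would first delete edges to a spanning subgraph $G'$ that is edge-minimal subject to $\delta(G')\ge 3$; then every edge of $G'$ meets a vertex of degree $3$, so the vertices of degree $\ge 4$ form an independent set $A$ all of whose neighbours have degree exactly $3$. Rather than cubifying $G'$ by splitting the vertices of $A$—which inflates $n$ and so corrupts the $4n/7$ budget—I would run the core discharging argument directly on $G'$, using that each vertex of $A$ over-dominates (it reaches at least four degree-three vertices at no extra matching cost), so the local ratio near $A$ beats the cubic target while the $3$-regular remainder is treated exactly as in the core. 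I expect the crux to be gluing these two analyses along their shared interface—the degree-three neighbours of $A$—while simultaneously controlling the odd-order parity and confirming that Petersen stays the sole tight instance; this interface bookkeeping is exactly what makes the full $\delta\ge 3$ conjecture markedly harder than the cubic case on which it rests.
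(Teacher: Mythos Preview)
The paper does not prove this statement. Conjecture~2 is presented as an open problem, and the main result of the paper is only the cubic special case: every connected cubic graph other than the Petersen graph satisfies $\gamma_{pr}(G)\le 4n/7$. The general $\delta(G)\ge 3$ case is not settled here, so there is no proof in the paper against which your reduction can be compared.

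Your proposal nonetheless has two genuine gaps even as a plan.

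First, the cubic ``core'' via induction on $n$ does not get off the ground: deleting a settled region from a cubic graph does not leave a cubic graph, so one cannot ``recurse on a smaller cubic graph'' as you describe. Any honest induction would have to take place in a strictly larger class, and then the Petersen uniqueness claim becomes far subtler than checking a finite list of irreducible local configurations. The paper's actual proof of the cubic case is entirely different in character: it fixes a minimum paired dominating set $S$ chosen to minimise first $\lambda(S)$ and then $|A\cup B|$, defines an edge-weight function $f$ on $[S,V\setminus S]$ normalised so that every vertex of $V\setminus S$ contributes total weight~$1$, and then shows, by a lengthy case analysis on the components of $G[S]$ (in particular on how $D$-pairs are linked), that each pair in $S$ receives weight at least $3/2$. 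This yields $3|S|/4\le |V\setminus S|$ and hence $|S|\le 4n/7$; no deletion or induction occurs, and the Petersen graph surfaces exactly once, as the unique configuration surviving the final subcase.

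Second, the reduction from $\delta\ge 3$ to cubic via $3$-factors fails well beyond ``sporadic'' exceptions. For instance $K_{3,5}$ has $\delta=3$, even order, and no $3$-factor (a $3$-regular bipartite graph must have parts of equal size), and such obstructions form large families, not isolated cases. Your fallback---running the argument directly on an edge-minimal spanning subgraph $G'$ with $\delta(G')\ge 3$---is not a reduction but a restatement of the full conjecture; the ``interface bookkeeping'' you correctly flag as the crux is precisely the open problem.
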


Lu et al.~\cite{DBLP:journals/gc/LuWW16}  proved Conjecture 2 for $k$-regular graphs with $k\geq 4$, and later~\cite{DBLP:journals/dam/Lu0WW19} proved it for claw-free graphs with minimum degree at least 3.
\begin{theorem}(\cite{DBLP:journals/dam/Lu0WW19})
If G is a connected claw-free graph of order n with $\delta(G) \geq 3$, then $\gamma_{pr}(G) \leq 4n/7$.
\end{theorem}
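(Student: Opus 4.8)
\medskip
\noindent\textbf{A plan for the proof.}
The plan is to argue by induction on $n=|V(G)|$, with the (finitely many) connected claw-free graphs with $\delta\ge 3$ on at most a fixed number of vertices serving as base cases, where $\gamma_{pr}(G)\le 4n/7$ is verified directly (for instance $\gamma_{pr}(K_4)=2<16/7$). Before the inductive step I would record the local structure forced by the hypotheses: since $\delta(G)\ge 3$, among any three neighbours of a vertex two must be adjacent, so every vertex of $G$ lies in a triangle, and more generally the neighbourhood of every vertex has independence number at most $2$. Thus $G$ is rich in triangles, diamonds and copies of $K_4$, which is where the reducible configurations will come from.

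The heart of the argument is to exhibit, in any $G$ that is not a base case, a \emph{reducible configuration}: a vertex set $C$ and a subset $P\subseteq C$ with (i) $G[P]$ admitting a perfect matching, (ii) $P$ dominating every vertex of $C$, (iii) $|P|\le\tfrac{4}{7}|C|$, and (iv) $G-C$ (possibly after a small local repair) again lying in the class over which we induct. Given such a configuration, the induction hypothesis supplies a paired dominating set $P'$ of $G-C$ with $|P'|\le\tfrac{4}{7}(n-|C|)$, and $P\cup P'$ is then a paired dominating set of $G$ of size at most $\tfrac{4}{7}n$. The natural candidates are local and dense: if $G$ contains a $K_4$ on $\{a,b,c,d\}$ take $C=\{a,b,c,d\}$ and $P=\{a,b\}$, so $|P|/|C|=\tfrac{1}{2}$; if $G$ contains only a diamond or a triangle, take $C$ to be that triangle or diamond together with just enough of its external neighbours to keep the ratio below $\tfrac{4}{7}$, and let $P$ consist of one or two edges inside $C$. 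Because these pieces are dense, a single edge used in $P$ usually dominates five or six vertices, so the ratio actually achieved is comfortably below $4/7$; the constant $4/7$ is not itself the source of difficulty.

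The real obstacle is clause (iv): deleting the dominated set $C$ may produce vertices of degree $1$ or $2$ in $G-C$ and may split off components too small to induct on. I would handle this by \emph{strengthening the inductive statement} to also cover connected graphs with a bounded number of degree-$2$ vertices, paying a controlled additive term per such vertex, and by always choosing $C$ to be ``locally saturated'' so that its boundary vertices retain enough neighbours outside $C$ --- arguing that a well-chosen configuration never creates more bad vertices than the slack can absorb. For the genuinely small residual pieces, where induction is unavailable, I would instead invoke Theorems~\ref{thm2} and~\ref{thm3} ($\gamma_{pr}\le 2n/3$, resp. $2(n-1)/3$, once $\delta\ge 2$); since such a piece is only a small fraction of $V(G)$, passing from ratio $\tfrac{1}{2}$ to ratio $\tfrac{2}{3}$ there is still consistent with an overall bound of $4n/7$. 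When no reducible configuration exists at all, $G$ must have one of a short list of rigid shapes (roughly, $K_4$'s glued along single vertices), which are checked by hand.

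An alternative route worth trying is to start from a minimum total dominating set, which satisfies $\gamma_t(G)\le n/2$ for connected graphs with $\delta\ge 3$ (here $\gamma_t$ is the total domination number), and convert it into a paired dominating set by using claw-freeness to cheaply pair up the vertices left unmatched in the induced subgraph; there the difficulty is bounding how many such unmatched vertices arise. In either case, the bookkeeping of the boundary effects --- most cleanly organised as a discharging argument over the chosen configurations --- is, I expect, the main technical burden.
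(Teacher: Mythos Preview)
The paper does not contain a proof of this theorem: it is quoted as a result of Lu, Wang, Wang and Wu~\cite{DBLP:journals/dam/Lu0WW19} and used only as context. So there is no ``paper's own proof'' to compare against. What the present paper \emph{does} prove is the cubic case (Theorem~\ref{thm7}), and its method --- which is also the method of~\cite{DBLP:journals/dam/Lu0WW19} and of~\cite{DBLP:journals/gc/HuangKS13} --- is entirely different from your plan: one fixes a minimum PDS $S$ extremal with respect to $\lambda(S)$ and $|A\cup B|$, defines a fractional weight $f$ on the edges of $[S,V\setminus S]$ summing to $1$ at each vertex of $V\setminus S$, and then shows (via a long case analysis driven by Lemma~\ref{lemma1}) that every component of $G[S]$ carries weight at least $\tfrac{3}{2}$ per pair, yielding $\tfrac{3}{4}|S|\le |V\setminus S|$. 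There is no induction and no deletion of configurations.

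Your inductive/reducible-configuration scheme, as written, has a genuine gap at exactly the point you flag yourself: clause~(iv). Deleting a $K_4$ or a triangle from a claw-free graph with $\delta\ge 3$ will typically destroy both the degree condition and connectivity, and your proposed fix --- strengthening the induction to allow a bounded number of degree-$2$ vertices with a ``controlled additive term'' --- is not an argument but a hope. To make it work you would need to state precisely what class you induct over, what the additive penalty is, and then verify that \emph{every} configuration you remove stays within the budget; experience with this type of problem (and the fact that the cited authors chose the weight-function route) suggests this bookkeeping does not close. The alternative you mention, upgrading a minimum total dominating set to a PDS, runs into the same quantitative wall: the known bound $\gamma_t\le n/2$ already sits above $4n/7$, so you cannot afford to add even one vertex per seven when pairing, and claw-freeness alone does not obviously give that. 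If you want a viable approach, look at the weight-function machinery of Lemmas~\ref{lemma1} and~\ref{lemma2} and the partition $S=A\cup B\cup C\cup D$; that is how both the claw-free and the cubic cases are actually handled.
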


People also give bound for the paired domination number of special graphs, such as $P_5$-free graphs~\cite{DBLP:journals/gc/DorbecG08}, subdivided star-free graphs~\cite{DBLP:journals/gc/DorbecG10}, generalized claw-free graphs~\cite{DBLP:journals/jco/DorbecGH07},  claw-free graphs~\cite{DBLP:journals/gc/HuangKS13}, and so on.

In this paper, we follow this line of research and prove Conjecture 2 for cubic graphs, which completes the picture for regular graphs. Our result does not rely on the graph to be claw-free. With the intuition that denser graph should have smaller power domination set, our result is a promising step towards proving Conjecture 1.
\section{Notations and Terminology}
Here we give a brief list of the graph theory concepts used in this paper, for other
notations and terminology, we refer readers to~\cite{bollobas2013modern}.

 All the graph considered in this paper are simple and undirected. The \textit{neighborhood} of a vertex $u\in V$ is $N_G(u) = \{v \in V|uv \in E\}$. The \textit{degree} of $v$ in G, denoted by $d_G(v)$, is the number $|N_G(v)|$. A \textit{cubic graph} is a graph in which every vertex has degree 3. We use $\delta(G)$ to denote the minimum vertex degree of $G$. A vertex with degree one is a \textit{leaf}. For a vertex set $S \subseteq V$, we use $G[S]$ to denote the subgraph induced by $S$. The neighborhood of $v$ in $S$ is $N_S(v) = N_G(v) \cap S$. For two disjoint vertex sets $X$ and $Y$, we use $[X,Y]$ to denote the set of edges between $X$ and $Y$.

Given a graph $G$, we say $S\subseteq V(G)$ is a paired dominating set(abbreviated as PDS) of $G$, if $S$ is a dominating set of $G$ and $G[S]$ has a perfect matching.
Let $S$ be a PDS of $G$, we say $x\in V\setminus S$ is a \textit{private neighbor} of $u\in S$, if $N_S(x)=\{u\}$. Fix a perfect matching $M$ of $G[S]$, we say $u$ and $\overline{u}$ form a pair if $u\overline{u}\in M$, and we use $S_u$ to denote the pair of vertices $u$ and $\overline{u}$.
A pair $S_u$ is a \textit{solo pair} if both $u$ and $\overline{u}$ are leaves in $G[S]$. Otherwise, we call it a \textit{linked pair}.
For a subset $S \subseteq V$, $\lambda(S)$ denotes the number of edges in $G[S]$. We use $u'$ to denote the private neighbor of $u\in S$ if $u$ has exactly one private neighbor. For any positive integer $n$, we use $[n]$ to denote the set $\{1,2,\ldots,n\}$.

Let $E_1$ be a subset of edges in $M$, the perfect matching of $G[S]$. Let $E_2$ be a subset of $E(G)$. \emph{Replacing $E_1$ with $E_2$} means to replace $S$ with $S'=S\setminus V(E_1)\cup V(E_2)$. Moreover, the edges in $E_2$ belong to the perfect matching of $G[S']$.

The Petersen Graph is a cubic graph with 10 vertices and 15 edges, see Figure~\ref{fig:Petersen} for an illustration.
\begin{figure}
\begin{center}
\begin{tikzpicture}[style=thick]
\draw [](18:2cm) -- (90:2cm) -- (162:2cm) -- (234:2cm) --
(306:2cm) -- cycle;
\draw (18:1cm) -- (162:1cm) -- (306:1cm) -- (90:1cm) --
(234:1cm) -- cycle;
\foreach \x in {18,90,162,234,306}{
\draw (\x:1cm) -- (\x:2cm);
\draw (\x:2cm)[fill]circle[radius=0.07];
\draw (\x:1cm)[fill]circle[radius=0.07];

}

\end{tikzpicture}
\end{center}
\caption{The Petersen Graph}
 \label{fig:Petersen}
\end{figure}
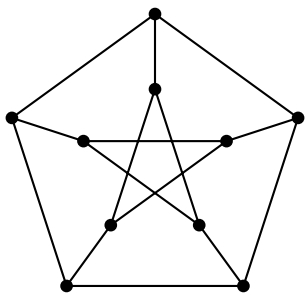

\section{Main Result}
In this section, we provide our main result, which is the following theorem.
\begin{theorem}\label{thm7}
Let $G$ be a cubic graph, then $\gamma_{pr}(G)\leq 4n/7$ if $G$ is not the Petersen Graph.
\end{theorem}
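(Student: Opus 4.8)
The plan is to fix a paired dominating set $S$ that is extremal in two ways -- first $|S|$ is as small as possible, and then, subject to that, $\lambda(S)$ is as small as possible -- together with a perfect matching $M$ of $G[S]$, and to prove that $|S|\le 4n/7$ unless $G$ is the Petersen graph (for which $\gamma_{pr}=6>40/7$, so the exception is genuine). We may assume $G$ is connected, applying the conclusion componentwise otherwise, and a handful of small orders can be checked by hand. Give every vertex of $G$ one unit of charge, for a total of $n$. The goal is to redistribute the charge so that every pair $S_u$ finishes with at least $7/2$ units; then $n\ge \frac{7}{2}\cdot\frac{1}{2}|S|=\frac{7}{4}|S|$, i.e. $|S|\le 4n/7$, which is Theorem~\ref{thm7}.

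Each pair $S_u=\{u,\overline{u}\}$ starts with the two units on $u$ and $\overline{u}$ and collects further charge from the vertices it is ``responsible'' for. If $S_u$ is a solo pair it is an isolated edge of $G[S]$, so it has up to four edges into $V\setminus S$; in particular if $u$ and $\overline{u}$ have two private neighbours between them the pair already reaches $4\ge 7/2$. Linked pairs must instead account for the edges of $G[S]$ recorded by $\lambda(S)$, and here charge flows along those edges between a pair and its neighbours in $G[S]$. For ``rich'' pairs -- enough private neighbours, or a neighbouring pair with spare charge -- the $7/2$ threshold is met by a direct count. The whole difficulty is the ``poor'' pairs, those that would settle for $3$ or even $2$ units because almost all of their neighbourhood is already dominated by other pairs; for each such configuration I would show that removing $S_u$, or more generally replacing $\{u\overline{u}\}$ together with a bounded number of nearby matching edges by a more efficient set of edges -- the operation ``replacing $E_1$ with $E_2$'' from Section~2 -- yields a paired dominating set contradicting the minimality of the pair $(|S|,\lambda(S))$.

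The main obstacle, and where I expect the bulk of the work to lie, is the classification of the poor configurations in which no such exchange exists. One must enumerate the possibilities for a poor pair by the number of private neighbours of $u$ and of $\overline{u}$, by whether $S_u$ is solo or linked, and by the adjacency pattern among the (necessarily few) pairs meeting $N_G(u)\cup N_G(\overline{u})$; in each case, if every candidate exchange either destroys domination, destroys the perfect matching, or merely relocates the deficiency, then the cubic degree condition forces the $2$- or $3$-neighbourhood of $S_u$ to close up, and one checks that the only graph it can close up to is the Petersen graph. Thus for $G\ne$ Petersen every poor pair is repairable, and after finitely many exchanges (terminating because $(|S|,\lambda(S))$ strictly decreases, with a further tie-break if needed) one reaches a configuration in which the discharging goes through.

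A few routine points have to be folded in. The matching/parity issue -- an exchange-modified $S$ might momentarily leave a single leaf unmatched -- is handled by adjoining one neighbour to form a solo pair, which is automatically rich in private neighbours and hence not an obstruction to the count. One also verifies that the local exchanges are genuinely local, so that repairing one poor pair cannot create a cascade that fails to terminate; the lexicographic extremal choice of $S$ takes care of this. Combining the discharging inequality with the exclusion of the Petersen graph gives $|S|\le 4n/7$ and completes the proof of Theorem~\ref{thm7}.
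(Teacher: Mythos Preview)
Your high-level plan---fix an extremal minimum PDS $S$, discharge from $V\setminus S$ to the pairs of $S$, and use local exchange arguments to rule out ``poor'' pairs---is exactly the framework the paper uses. But as written the proposal does not yet contain the ideas that make the proof go through, and a couple of passages are off.

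The heart of the paper's argument is not a uniform discharging rule but a carefully engineered weight function $f$ on the edges of $[V\setminus S,S]$. Each $x\in V\setminus S$ distributes its unit of charge to its $S$-neighbours \emph{non-uniformly}, according to a partition $S=A\cup B\cup C\cup D$ determined by private neighbours: roughly, $A$-neighbours are starved, $D$-neighbours are fed, and $B\cup C$ gets an intermediate share (e.g.\ $1/6$). Without this asymmetry one cannot show that every component of $G[S]$ collects at least $3/2$ per pair; a naive equal split does not separate the cases. Your plan says only ``redistribute the charge'' and never introduces the $A/B/C/D$ classification, so the counting step has no content yet.

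Two smaller points. First, the paper needs a second tie-break beyond $(|S|,\lambda(S))$: among minimum PDS's with minimum $\lambda(S)$ one further minimises $|A\cup B|$, and this condition (P2) is genuinely invoked in the case analysis. Your extremal pair $(|S|,\lambda(S))$ is not enough. Second, the sentence ``after finitely many exchanges\ldots one reaches a configuration in which the discharging goes through'' is confused: once $S$ is chosen extremal there are no further exchanges to perform. Every exchange in the paper is hypothetical, used only to derive a contradiction (or, via Lemma~\ref{lemma1}, to force the existence of a specific vertex $t$ with $N_S(t)$ contained in a prescribed set). The long case analysis for solo and linked $D$-pairs is organised entirely around chaining such forced vertices, and the Petersen graph appears at the unique point where the chain closes up with no contradiction.

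So the strategy is right, but the proof will not start until you write down the $A/B/C/D$ partition, the exact weight rule, and the structural lemmas (Lemma~\ref{lemma2} and the ``constraint pair'' Lemmas~\ref{lemma7}--\ref{lemma8}) that drive the case analysis.
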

In the proof of Theorem~\ref{thm7}, we heavily use the following lemma.
\begin{lemma}(\cite{DBLP:journals/gc/HuangKS13})\label{lemma1}
Let $G$ be any connected graph and $S$ a minimum PDS of $G$. Suppose that $X\subseteq S$ and $Y\subseteq V\setminus  S$. If $S'=S\setminus X \cup Y$ dominates $X$ with
$|S'|<|S|$ and $G[S']$ has a perfect matching, then there exists a vertex $x$ in $V\setminus (S \cup Y )$ such that $N_S(x)\subseteq  X$.
\end{lemma}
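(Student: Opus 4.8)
The plan is to argue from the minimality of $S$. Since $G[S']$ has a perfect matching by hypothesis and $|S'|<|S|$, the set $S'$ cannot be a paired dominating set of $G$ (otherwise it would be a PDS strictly smaller than the minimum PDS $S$). As the matching requirement is already satisfied, the only remaining condition that $S'$ can violate is domination. Hence $S'$ is not a dominating set, so there is a vertex $x\in V\setminus S'$ with $N_{S'}(x)=\emptyset$, i.e.\ an undominated vertex. The whole lemma then reduces to pinning down where this $x$ sits and reading off its neighbours in $S$.

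First I would locate $x$ using a bookkeeping identity. Since $X\subseteq S$ and $Y\subseteq V\setminus S$, we have $X\cap S'=\emptyset$, and moreover $S'\cup X=(S\setminus X)\cup Y\cup X=S\cup Y$. Consequently $V\setminus S'=\bigl(V\setminus(S\cup Y)\bigr)\cup X$, a disjoint union. So from $x\in V\setminus S'$ we get that either $x\in X$ or $x\in V\setminus(S\cup Y)$. The first possibility is excluded by the hypothesis that $S'$ dominates $X$: every vertex of $X$ either lies in $S'$ or has a neighbour in $S'$, whereas $x$ does neither. Therefore $x\in V\setminus(S\cup Y)$, which is exactly the location claimed in the conclusion.

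It then remains to verify $N_S(x)\subseteq X$, and this follows directly from $N_{S'}(x)=\emptyset$. Take any neighbour $v$ of $x$ with $v\in S$. If $v\notin X$, then $v\in S\setminus X\subseteq S'$, so $v$ would be a neighbour of $x$ lying in $S'$, contradicting $N_{S'}(x)=\emptyset$. Hence every neighbour of $x$ in $S$ must belong to $X$, that is, $N_S(x)\subseteq X$, which completes the argument.

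The step I would be most careful about — really the only substantive one — is the opening deduction: one must invoke the hypotheses in the correct order, using the fact that the perfect-matching condition is \emph{given}, so that the minimality of $S$ can only be reconciled with $|S'|<|S|$ by $S'$ failing to dominate. Everything afterwards is set-membership bookkeeping driven by the identity $S'\cup X=S\cup Y$. It is worth noting that connectivity of $G$ plays no real role in this particular lemma; it is simply inherited from the ambient setting in which the lemma is applied.
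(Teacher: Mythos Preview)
Your argument is correct. Note, however, that the paper does not actually supply a proof of this lemma: it is quoted verbatim from \cite{DBLP:journals/gc/HuangKS13} and used as a black box throughout. So there is no ``paper's own proof'' to compare against here. Your write-up is the standard minimality-plus-bookkeeping argument one would expect for this statement, and every step checks out: the key identity $S'\cup X=S\cup Y$ (using $X\subseteq S$, $Y\subseteq V\setminus S$) gives the disjoint decomposition $V\setminus S'=(V\setminus(S\cup Y))\cup X$, the hypothesis that $S'$ dominates $X$ rules out $x\in X$, and $N_{S'}(x)=\emptyset$ together with $S\setminus X\subseteq S'$ forces $N_S(x)\subseteq X$. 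Your remark that connectivity is not actually used in the proof is also accurate.
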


Given a PDS $S$, and a perfect matching $M$ of $G[S]$, we partition $S$ into four subsets as follows:

\begin{enumerate}
\item $A = \{v \in S |$ $v$ together with $\overline{v}$ have at least two private neighbors$\}$,
\item $B = \{v \in S | v$ has a private neighbor and $\overline{v}$ has no private neighbor$\}$,
\item $C = \{v \in S | \overline{v} \in B\}$,
\item  $D = \{v \in S |$ neither $v$ nor $\overline{v}$ has any private neighbor$\}$.
\end{enumerate}

We call a pair of vertices in $S$ an \textbf{$A$-pair} if both of them belong to $A$; a \textbf{$BC$-pair} if one belongs to $B$ and the other belongs to $C$; and a \textbf{$D$-pair} if both belong to $D$.

Let $S$ be the minimum PDS of $G$ and $M$ a perfect matching of $G[S]$ that satisfies:
\begin{enumerate}[(P1)]
\item  $\lambda(S)$ is minimized;
\item  $|A\cup B|$  is minimized, subject to $P_1$.
\end{enumerate}

The following lemma lists some properties of $G[S]$ whose proof can be found in\cite{DBLP:journals/gc/HuangKS13}.
\begin{lemma}(\cite{DBLP:journals/gc/HuangKS13})\label{lemma2}
Let $G$ be a connected cubic graph and $S$ a minimum PDS of $G$ satisfying P1 and P2. Let $A, B, C$ and $D$ be the four vertex
sets defined accordingly. Then the following statements hold.
\begin{enumerate}
\item Each vertex in $C$ is a leaf in $G[S]$.
\item At least one vertex of each $D$ pair is a leaf in $G[S]$.
\item Let $u$ and $v$ be two different vertices in $B\cup D$, if $uv\in E$, then $G[S_u\cup S_v]$ induces a $P_4$ in $G[S]$. Moreover, there is a vertex $x\in V\setminus S$, such that $N_S(x)=\{\overline{u},\overline{v}\}$.
\end{enumerate}
\end{lemma}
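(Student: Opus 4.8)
The plan is to establish all three parts by contradiction, each time modifying $S$ by a local exchange and invoking the optimality conditions — mainly P1 (minimality of $\lambda(S)$) together with the minimality of $|S|$ and Lemma~\ref{lemma1}. The single principle I would reuse throughout is that a vertex of $S$ with no private neighbor may be deleted from $S$ without leaving any vertex undominated, \emph{except} possibly a vertex all of whose $S$-neighbors are deleted at once; and since $G$ is cubic, once the matching neighbor and one or two further neighbors of a vertex are located, its entire neighborhood is determined.

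For part~1, let $c\in C$ with partner $b=\overline{c}\in B$, so $b$ has a private neighbor $b'$ and $c$ has none. If $c$ were not a leaf of $G[S]$ I would replace the matching edge $bc$ by $bb'$, setting $S'=S\setminus\{c\}\cup\{b'\}$ and matching $b$ with $b'$. As $c$ has no private neighbor, $S'$ is a PDS of the same size; and deleting $c$ removes at least two edges of $G[S]$ (its matching edge together with a further edge, since $c$ is not a leaf), while $b'$ contributes the single edge $b'b$ because $N_S(b')=\{b\}$. Thus $\lambda(S')<\lambda(S)$, contradicting P1. Part~2 runs the same way: if both members $u,\overline{u}$ of a $D$-pair were non-leaves, then $S\setminus\{u,\overline{u}\}$ dominates $\{u,\overline{u}\}$, has a perfect matching and is smaller, so Lemma~\ref{lemma1} yields $x\in V\setminus S$ with $N_S(x)\subseteq\{u,\overline{u}\}$; the absence of private neighbors forces $N_S(x)=\{u,\overline{u}\}$, and cubicity puts the third neighbor of $x$ outside $S$. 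Replacing $\overline{u}$ by $x$ (matched with $u$) then deletes at least two edges and adds only $xu$, so $\lambda$ drops, contradicting P1 again.

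For part~3, fix distinct pairs with $u,v\in B\cup D$ and $uv\in E$. I would first manufacture the external vertex by Lemma~\ref{lemma1} with $X=\{\overline{u},\overline{v}\}$: deleting $X$ and rematching $u$ with $v$ (possible since $uv\in E$) gives $S'=S\setminus X$, which is smaller, has a perfect matching, and dominates $X$ via $u,v$; the lemma supplies $x\in V\setminus S$ with $N_S(x)\subseteq\{\overline{u},\overline{v}\}$, sharpened to $N_S(x)=\{\overline{u},\overline{v}\}$ since neither $\overline{u}$ nor $\overline{v}$ has a private neighbor. This is the `moreover' statement. For the path structure I must exclude the three non-path edges. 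If $\overline{u}\,\overline{v}\in E$ then $\overline{u}$ has $S$-neighbors $u,\overline{v}$ and is a non-leaf, contradicting part~1 when $u\in B$ (then $\overline{u}\in C$) and part~2 when $u\in D$ (then $u$, being adjacent to $v$, is also a non-leaf of the $D$-pair). If $u\overline{v}\in E$ then $\overline{v}$ is adjacent to $v$, $u$ and the vertex $x$, so by cubicity its $S$-neighbors are exactly $u,v$ and $\overline{v}$ is a non-leaf, contradicting part~1 or part~2 applied to $\{v,\overline{v}\}$; the edge $\overline{u}v$ is symmetric. Only $u\overline{u}$, $uv$, $v\overline{v}$ remain, so $G[S_u\cup S_v]$ is the path $\overline{u}-u-v-\overline{v}$.

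The delicate part is not the edge counting but the repeated verification that each exchange preserves domination, which I would always reduce to the no-private-neighbor principle of the first paragraph. I expect part~3 to be the genuine obstacle: it consumes parts~1 and~2, it must first produce $x$ and then re-use it, and the $B$-versus-$D$ case split has to be driven through both private-neighbor constraints while cubicity is what finally forces each neighborhood to be exactly the three vertices required.
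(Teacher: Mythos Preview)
Your argument is correct in all three parts; the exchanges you describe are exactly the standard ones, and the edge-count comparisons go through as written. Two minor remarks: in part~3 you do not actually need the vertex $x$ or cubicity to rule out $u\overline{v}$ and $\overline{u}v$ --- the two $S$-neighbours $v,u$ of $\overline{v}$ already make it a non-leaf, so parts~1 and~2 apply directly --- but invoking $x$ does no harm. Also, you correctly (and necessarily) read ``two different vertices'' as ``vertices from two different pairs''; otherwise the $P_4$ conclusion is vacuous.

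As for comparison with the paper: there is nothing to compare. The paper does not prove Lemma~\ref{lemma2}; it is quoted verbatim from \cite{DBLP:journals/gc/HuangKS13} with the remark that the proof can be found there. Your proof is the natural one and is essentially what appears in that reference: part~1 by swapping $c$ for a private neighbour of its partner and counting edges against P1, part~2 by first deleting the $D$-pair and invoking Lemma~\ref{lemma1} to find the common neighbour $x$, then swapping $\overline{u}$ for $x$, and part~3 by deleting $\{\overline{u},\overline{v}\}$ and rematching along $uv$ to produce $x$, followed by the edge-by-edge exclusion using parts~1 and~2. There is no alternative route in the paper to contrast with.
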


We adopt the strategy used in~\cite{DBLP:journals/dam/Lu0WW19} to prove Conjecture 2 for cubic graphs. We design a weight function for $[S,V \setminus S]$, such that for every vertex in $V \setminus S$, the weights of all edges incident with it sum to one. Thus the total weight of edges in $[V \setminus S, S]$ are exactly $|V \setminus S|$.
At the same time, we show that averagely the edges incident with each pair have total weight at least 3/2. It follows that the total weight of edges in $[V\setminus S, S]$ is at least $3|S|/4$. Thus we have $3|S|/4 \leq |V \setminus S|$ and $|S| \leq 4n/7$.

We now define the weight function $f:[V\setminus  S,S]  \rightarrow [0,1]$.
\begin{enumerate}
\item If $x$ is a private neighbor of some vertex $u\in S$, then $f(xu)=1$;
\item Otherwise, if $N_S(x)\cap (B\cup C\cup D)\neq \emptyset$, then $f(xu)=0$, for any $u\in N_S(x)\cap A$;
\item If $N_S(x)$ is a subset of $A, B\cup C$ or $D$, then $f(xu)=1/|N_S(x)|$, for each $u\in N_S(x)$;
\item If $N_S(x)\cap D\neq \emptyset$, then $f(xu)=0$ for every vertex $u\in A$, $f(xu)=1/6$ for every vertex $u\in B\cup C$, and $f(xu)=\frac{1-|N_{B\cup C}(x)|/6}{|N_D(x)|}$ for every vertex $u\in D$.
\end{enumerate}

According to the definition of the weight function $f$, we have the following observations.
\begin{lemma}
For any edge $xu$ with $x\in V\setminus  S$, $u\in S$, the following statements hold.
\begin{enumerate}
\item $f(xu)\in \{0,1/6,1/3,5/12,1/2,2/3,5/6,1\}$;
\item $\sum_{u\in N_S(x)} f(xu)=1$;
\item If $u\in B\cup C$, then $f(xu)\in \{1/6,1/3,1/2,1\}$;
\item If $u\in D$, then $f(xu)\in \{1/3,5/12,1/2,2/3,5/6,1\}$. Moreover, $f(xu)=1/3$ if and only if $|N_S(x)|=3$ and $N_S(x)\subseteq D$.
\end{enumerate}
\end{lemma}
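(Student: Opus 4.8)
The statement is a package of bookkeeping facts about the weight function $f$, so the plan is a straightforward case analysis driven by the four defining rules, supported by one structural input: since $G$ is cubic and $S$ dominates $G$, every $x\in V\setminus S$ has $1\le |N_S(x)|\le 3$, and $|N_S(x)|=1$ forces $x$ to be the private neighbor of its unique neighbor in $S$; hence any $x$ to which Rule~1 does \emph{not} apply has $|N_S(x)|\in\{2,3\}$. Before the case analysis I would pin down the intended reading of Rules 2--4 so that every edge $xu$ receives exactly one value: first set $f(xu)=0$ for $u\in N_S(x)\cap A$ whenever $N_S(x)$ meets $B\cup C\cup D$, and then distribute the remaining unit of weight over $N_S(x)\setminus A$, splitting it evenly when this set lies inside $B\cup C$ or inside $D$ (Rule~3) and using Rule~4 when it meets both $D$ and $B\cup C$. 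The only value not manifestly in $[0,1]$ is the Rule~4 weight $\frac{1-|N_{B\cup C}(x)|/6}{|N_D(x)|}$ on a $D$-edge; here $|N_{B\cup C}(x)|\le 2$ and $|N_D(x)|\ge 1$ put the numerator in $[\tfrac23,1]$, so this weight lies in $(0,1]$ and $f$ really is $[0,1]$-valued.

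For item~2 I would check $\sum_{u\in N_S(x)}f(xu)=1$ rule by rule. Rule~1 gives a single edge of weight $1$; Rule~3 gives $|N_S(x)|$ edges of weight $1/|N_S(x)|$; and in the Rule~2/Rule~4 regime the $A$-edges contribute $0$, the $B\cup C$-edges contribute $|N_{B\cup C}(x)|/6$, and the $D$-edges contribute $|N_D(x)|\cdot\frac{1-|N_{B\cup C}(x)|/6}{|N_D(x)|}=1-|N_{B\cup C}(x)|/6$, for a total of $1$. (The pure cases $N_S(x)\subseteq B\cup C$ and $N_S(x)\subseteq D$ are consistent between Rules 3 and 4.)

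Items 1, 3, 4 then follow by listing, for each rule, the finitely many outputs once $|N_S(x)|\le 3$ is imposed. For a non-private $x$, Rule~3 produces only $\tfrac12$ (when $|N_S(x)|=2$) or $\tfrac13$ (when $|N_S(x)|=3$); Rules 2 and 4 produce $0$ on $A$-edges; Rule~4 produces $\tfrac16$ on $B\cup C$-edges; and Rule~4 produces on a $D$-edge the number $\frac{1-k/6}{m}$ with $k=|N_{B\cup C}(x)|\in\{0,1,2\}$, $m=|N_D(x)|\in\{1,2,3\}$ and $k+m\le 3$ --- running over the admissible pairs $(0,1),(0,2),(0,3),(1,1),(1,2),(2,1)$ gives $1,\tfrac12,\tfrac13,\tfrac56,\tfrac5{12},\tfrac23$ respectively. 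Taking the union over all rules gives item~1; restricting to edges with $u\in B\cup C$ leaves $\{1,\tfrac12,\tfrac13\}$ from Rules 1 and 3 plus $\tfrac16$ from Rule~4, which is item~3; and restricting to edges with $u\in D$ leaves $\{1,\tfrac12,\tfrac13\}$ from Rules 1 and 3 together with $\{1,\tfrac12,\tfrac13,\tfrac56,\tfrac5{12},\tfrac23\}$ from Rule~4, i.e.\ $\{\tfrac13,\tfrac5{12},\tfrac12,\tfrac23,\tfrac56,1\}$, which is item~4. For the ``moreover'' clause: a $D$-edge gets value $\tfrac13$ only via Rule~3 with $|N_S(x)|=3$ and $N_S(x)\subseteq D$, or via Rule~4 with $(k,m)=(0,3)$ --- and that is the very same configuration; conversely, for such an $x$ (not a private neighbor) Rule~3 applies and outputs $\tfrac13$, so the equivalence holds.

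The argument is routine; the one place demanding care is exactly the bookkeeping above --- making sure the rule list is genuinely exhaustive and internally consistent, and invoking the cubic degree bound everywhere it is needed, since that bound is what caps $|N_S(x)|$, $|N_{B\cup C}(x)|$ and $|N_D(x)|$ and hence makes each of the value sets finite and explicit.
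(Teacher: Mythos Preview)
Your case analysis is correct and is exactly what the paper has in mind: the paper states this lemma as a list of ``observations'' following directly from the definition of $f$ and gives no separate proof, so your explicit enumeration is precisely the missing routine verification. One tiny cosmetic point: in item~4 you list $1$ as coming from ``Rules 1 and 3'' for a $D$-edge, but a vertex in $D$ has no private neighbor by definition, so Rule~1 never fires on a $D$-edge; the value $1$ on a $D$-edge arises only via Rule~4 with $(k,m)=(0,1)$ (necessarily with at least one $A$-neighbor present), which you already record, so the conclusion is unaffected.
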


For any $S'\subseteq S$, we use $f(S')$ to denote the total weights of edges  in $[S',V\setminus  S]$. If $S'=\{u\}$, then we abbreviate $f(S')$ as $f(u)$.


For each  component in $G[S]$, we prove that it has enough total weight. We show that if it does not have enough total weight, then there is a smaller PDS, or a PDS with smaller $\lambda$ or smaller $A\cup B$, which contradicts the choice of $S$. 

The following lemma deals with components in $G[S]$ that contain no $D$ pair.  Note that every component in $S$ contains at most two $BC$ pairs, as a $BC$ pair can be adjacent with at most one pair in $G[S]$. An $A$ pair can be adjacent with at most two pairs in $G[S]$ as it has at least two private neighbors.


\begin{lemma}
Let $\mathcal{C}$ be a connected component in $G[S]$ that contains no $D$ pair, then $f(V(\mathcal{C}))\geq 3|V(\mathcal{C})|/4$.
\end{lemma}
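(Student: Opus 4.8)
The plan is to proceed by a case analysis on the structure of the component $\mathcal{C}$, which by hypothesis contains only $A$-pairs and $BC$-pairs. Since an $A$-pair together with $\overline v$ has at least two private neighbors, and by Lemma~2(3) adjacent pairs from $B\cup D$ force a $P_4$ structure with an external vertex seeing the two matched partners, the component is quite constrained: write $a$ for the number of $A$-pairs and $b$ for the number of $BC$-pairs in $\mathcal{C}$, so $|V(\mathcal{C})| = 2(a+b)$ and we must show $f(V(\mathcal{C})) \ge 3(a+b)/2$. The natural strategy is to show each pair contributes on average at least $3/2$ to $f$, handling first the pair types in isolation and then accounting for the edges of $G[S]$ that ``use up'' degree which would otherwise go to private neighbors.

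First I would record the local contribution bounds. For a pair $S_u$ that is a solo pair (both $u,\overline u$ leaves in $G[S]$), each of $u$ and $\overline u$ has exactly two edges into $V\setminus S$; if the pair is an $A$-pair with at least two private neighbors, a short check against the weight-function definition shows $f(S_u)\ge 2$ in the worst case and certainly $\ge 3/2$. For a $BC$-pair $S_u$ with $u\in B, \overline u=\bar u\in C$: by Lemma~2(1) the vertex in $C$ is a leaf, so it sends two edges out; $u\in B$ has a private neighbor carrying weight $1$; I would verify that the remaining edges, under rules (3)–(4) of $f$, push $f(S_u)\ge 3/2$, the delicate point being when a $C$-leaf's two edges both land on external vertices that also see $D$-vertices — but there is no $D$ pair in $\mathcal{C}$, and edges to $D$-vertices outside $\mathcal{C}$ are impossible since $\mathcal{C}$ is a component, so rule (4) never fires here and each such edge carries at least $1/6$, in fact the relevant edges carry at least $1/3$ because $N_S(x)\subseteq B\cup C$ has size at most $3$. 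Then I would sum: if every pair individually has $f\ge 3/2$ we are done immediately, so the remaining work is to handle pairs that fall below $3/2$ — these must be ``linked'' pairs that have lost a private neighbor to an internal edge of $G[S]$.

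The core of the argument is then an amortized/discharging count over $\mathcal{C}$. A linked $A$-pair can be adjacent to at most two other pairs, and a $BC$-pair to at most one other pair (as noted before the lemma); each internal edge of $G[S]$ between two pairs corresponds, via Lemma~2(3) when $B$-vertices are involved, to an external vertex $x$ with $N_S(x)$ equal to exactly the two partners $\overline u,\overline v$ — such an $x$ contributes $1/2$ to each of those two vertices. I would set up an inequality charging the shortfall of each deficient pair against the structure it is linked to, using that $\lambda(S)$ is minimized (P1) to rule out certain configurations (e.g. triangles or longer paths within $S$ that would admit a matching-improving swap) and using Lemma~1 to derive a contradiction with minimality of $|S|$ in the handful of genuinely tight cases. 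The main obstacle I anticipate is precisely this bookkeeping: ensuring the ``extra'' weight on the partner vertices $\overline u,\overline v$ of a linked pair (which are no longer leaves and may be adjacent to private neighbors of their own, or to further external vertices) always compensates the lost private-neighbor weight on $u,v$, across all ways a small tree of $A$- and $BC$-pairs can be assembled — this is where one must be careful that rule (2) of $f$ (which zeroes out edges from an $x$ that also sees $B\cup C\cup D$, toward $A$-vertices) does not silently drain weight away from an $A$-pair below the threshold, and invoke P2 (minimality of $|A\cup B|$) to exclude the borderline assemblies.
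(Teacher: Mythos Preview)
Your plan contains a concrete error and misses the simplification that makes the paper's argument almost immediate. The error: when you say ``edges to $D$-vertices outside $\mathcal C$ are impossible since $\mathcal C$ is a component,'' you are conflating components of $G[S]$ with components of $G$. A vertex $x\in V\setminus S$ adjacent to a $C$-vertex of $\mathcal C$ may well also be adjacent to a $D$-vertex lying in some other component of $G[S]$; rule~(4) of $f$ then applies and the edge from $x$ into $B\cup C$ carries only $1/6$, not $1/3$. So your claimed lower bound of $1/3$ on such edges is false, and you cannot conclude from it that each pair individually reaches $3/2$. (Your fallback discharging sketch also leans on Lemma~2(3) for arbitrary internal edges of $G[S]$, but that lemma only applies when both endpoints lie in $B\cup D$, so it says nothing about $A$--$A$ or $A$--$B$ links.)

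The missed simplification: an $A$-pair, solo or linked, by definition has at least two private neighbors, and each private-neighbor edge has weight exactly $1$; hence \emph{every} $A$-pair contributes at least $2$, regardless of rule~(2) or of how many internal $S$-edges it carries. There is nothing to discharge and no deficient $A$-pair. The paper then simply notes that a $BC$-pair contributes at least $1 + 2\cdot\tfrac16 = \tfrac43$ (the private neighbor of the $B$-vertex plus the two outgoing edges of the $C$-leaf), and since a component with no $D$-pair has at most $b\le 2$ $BC$-pairs, the inequality $2a + \tfrac43 b \ge \tfrac32(a+b)$ holds whenever $a\ge 1$. The two residual cases $a=0$, $b\in\{1,2\}$ are each one line: a solo $BC$-pair picks up the missing $1/6$ from the non-private edge out of $B$, and for two adjacent $BC$-pairs Lemma~2(3) supplies a vertex $x$ with $N_S(x)=\{\bar u,\bar v\}$ giving total weight $3$. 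Neither P1, P2, nor Lemma~1 is invoked.
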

\begin{proof}
If $\mathcal{C}=S_u$ is a solo $A$ pair, then $f(S_u)\geq 2$, because $S_u$ have at least 2 private neighbors.

If $\mathcal{C}=S_u$ is a solo $BC$ pair, then $f(S_u)\geq 1+ 1/6*3=3/2$, because $u$ or $\overline{u}$ has a private neighbor, and each edge in $[B\cup C,V\setminus S]$ has weight at least 1/6.

If $\mathcal{C}$ consists of two $BC$ pair $S_u$ and $S_v$ with $uv\in E(G)$, then according to Lemma~\ref{lemma2}, there is a vertex $x\in V\setminus S$, such that $N_S(x)=\{\overline{u}, \overline{v}\}$. Thus $f(S_u)+f(S_v)\geq f(u')+f(v')+f(x)=3$.

If $\mathcal{C}$ consists of $a(\geq 1)$ $A$ pairs and $b(\leq 2)$ $BC$ pairs, then the total weight of it is at least $2a+b+b/3\geq 3(a+b)/2$. Here we use the fact that each vertex in $C$ has two edges to $V\setminus S$, which totally have weight 1/3.

\end{proof}

Now we consider the weight of components containing at least one $D$ pair. A $D$ pair adjacent with another pair in $S$ is called a \textit{linked $D$ pair}. Two adjacent $D$ pairs are called \textit{linked $D$ pairs}.  

\subsection{Component with linked $D$ Pair}
\subsubsection{Component with No Linked  D pairs}

In this section, we discuss the weight of $\mathcal{C}$ if it contains no linked $D$ pairs. First we give some observations of $\mathcal{C}$.


\begin{lemma}\label{lemma11}
{Let $\mathcal{C}$ be a connected component in $G[S]$ that contains no linked $D$ pairs. An $A$ pair in $\mathcal{C}$ that contains a vertex with no private neighbor is a solo pair.}
\end{lemma}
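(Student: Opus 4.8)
My plan is a proof by contradiction resting on one local exchange. Write $S_u=\{u,\overline u\}$ for the $A$-pair in question and suppose $u$ has no private neighbor. Since $u,\overline u\in A$, the pair $\{u,\overline u\}$ has at least two private neighbors, so $\overline u$ by itself has at least two; as $G$ is cubic and the matching edge $u\overline u$ uses one of the three edges at $\overline u$, the vertex $\overline u$ has exactly two further edges, both of which must lead to private neighbors $y_1,y_2$ of $\overline u$. In particular $N_S(\overline u)=\{u\}$, so $\overline u$ is already a leaf of $G[S]$. Hence $S_u$ fails to be a solo pair precisely when $u$ is not a leaf of $G[S]$, i.e. when $u$ has $j\ge 1$ neighbors in $S\setminus\{\overline u\}$; I assume this and aim for a contradiction with the choice of $S$.

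The exchange I would use is to replace the matching edge $u\overline u$ by $\overline u y_1$: put $S'=S\setminus\{u\}\cup\{y_1\}$ and $M'=(M\setminus\{u\overline u\})\cup\{\overline u y_1\}$. Because $y_1$ is a private neighbor of $\overline u$ we have $\overline u y_1\in E(G)$, so $M'$ is a perfect matching of $G[S']$. I then check that $S'$ still dominates $G$: the only vertices that could lose their unique dominator when $u$ is removed from $S$ are the private neighbors of $u$, of which there are none, while $u$ itself is dominated by $\overline u\in S'$ along the edge $u\overline u$. Hence $S'$ is a paired dominating set with $|S'|=|S|$, and therefore a minimum paired dominating set.

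It then remains to compare $\lambda$. Passing from $S$ to $S'$ deletes exactly the $1+j$ edges of $G[S]$ at $u$ and creates exactly the edges of $G[S']$ at $y_1$; but $N_S(y_1)=\{\overline u\}$, so the two neighbors of $y_1$ other than $\overline u$ lie outside $S$ and are distinct from $y_1$, hence lie outside $S'$, and $\overline u y_1$ is the only edge created. Thus $\lambda(S')=\lambda(S)-(1+j)+1=\lambda(S)-j<\lambda(S)$, contradicting the minimality condition (P1). So $u$ must be a leaf of $G[S]$, and together with the leaf $\overline u$ this makes $S_u$ a solo pair.

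The only genuine decision in the argument — and the place where one has to be a little careful — is the choice of exchange: one replaces $u$, not the whole pair $S_u$, by a private neighbor of $\overline u$, so that the new edge $\overline u y_1$ repairs the perfect matching while the absence of private neighbors of $u$ automatically preserves domination; the resulting drop in $\lambda$ is then immediate. (I note in passing that the argument does not actually invoke the hypothesis that $\mathcal{C}$ contains no linked $D$-pair; it holds for any $A$-pair of a minimum paired dominating set satisfying (P1) one of whose vertices has no private neighbor.)
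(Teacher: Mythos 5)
Your proof is correct and follows essentially the same route as the paper: since the $A$-pair has at least two private neighbors and $u$ has none, $\overline{u}$ is forced to be a leaf, and the exchange replacing the matching edge $u\overline{u}$ by $\overline{u}\,\overline{u}'$ (for a private neighbor $\overline{u}'$ of $\overline{u}$) yields an equal-size PDS with strictly smaller $\lambda$, contradicting (P1). Your write-up just makes the domination, matching, and edge-count checks explicit (and correctly observes that the no-linked-$D$-pair hypothesis is not needed).
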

\begin{proof}
If $S_u$ is an $A$ pair in which $u$ has no private neighbor, then $\overline{u}$ has at least two private neighbors. Let $\overline{u}'$ be a private neighbor of $\overline{u}$. If $S_u$ is not a solo pair, then $d_S(u)\geq 2$. It follows that replacing $\{u\overline{u}\}$ with $\{\overline{u}\overline{u}'\}$ gives a PDS with smaller $\lambda$, and so $S_u$ is a solo pair.
\end{proof}

\begin{lemma}\label{lemma9}
Let $\mathcal{C}$ be a connected component in $G[S]$ that contains no linked $D$ pairs. There is no $D$ pair $S_u$ in $\mathcal{C}$ such that $d_A(u)=d_B(u)=d_D(u)=1$.
\end{lemma}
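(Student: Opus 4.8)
Suppose, towards a contradiction, that $\mathcal{C}$ contains a $D$ pair $S_u=\{u,\overline{u}\}$ with $d_A(u)=d_B(u)=d_D(u)=1$.

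\textit{Local structure.} Since $G$ is cubic, $3\ge d_S(u)=d_A(u)+d_B(u)+d_C(u)+d_D(u)\ge 3$, hence $d_S(u)=3$, $d_C(u)=0$, and $u$ has no neighbour outside $S$; write $N(u)=\{\overline{u},a,b\}$ with $a\in A$ and $b\in B$, the unique $D$-neighbour of $u$ being $\overline{u}$. As $u$ is not a leaf of $G[S]$, Lemma~\ref{lemma2} forces $\overline{u}$ to be a leaf of $G[S]$, and its two neighbours $p,q$ in $V\setminus S$ are non-private since $\overline{u}\in D$. Applying Lemma~\ref{lemma2} to the adjacent vertices $u\in D$ and $b\in B$, the subgraph $G[S_u\cup S_b]$ induces a $P_4$ in $G[S]$, necessarily with ends $\overline{u}$ and $\overline{b}$, and there is a vertex $x\in V\setminus S$ with $N_S(x)=\{\overline{u},\overline{b}\}$; thus $x\in\{p,q\}$, say $x=p$. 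By Lemma~\ref{lemma2}, $\overline{b}\in C$ is a leaf of $G[S]$, and $b\in B$ has a private neighbour $b'$ with $N_S(b')=\{b\}$, so $N(b)=\{\overline{b},u,b'\}$. Finally $S_a$ is adjacent to $S_u$ through $ua$, so it is not a solo pair, and Lemma~\ref{lemma11} gives $a$ a private neighbour $a'$ and $\overline{a}$ a private neighbour $\overline{a}'$, whence $N(a)=\{\overline{a},u,a'\}$.

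\textit{Strategy.} The plan is to build from $(S,M)$ a paired dominating set contradicting the minimality of $S$, or property P1, or property P2. The key observation is that the pair $S_u$ is wasteful: $\overline{u}$ is a leaf of $G[S]$ while $u$ carries three of its edges. Deleting the two leaves $\overline{u},\overline{b}$ and re-matching $u$ to $b$ gives $S_0=S\setminus\{\overline{u},\overline{b}\}$, for which $G[S_0]$ has the perfect matching $\bigl(M\setminus\{u\overline{u},b\overline{b}\}\bigr)\cup\{ub\}$, and $|S_0|=|S|-2$. A vertex that $S_0$ fails to dominate lies in $V\setminus S$ and has all its $S$-neighbours in $\{\overline{u},\overline{b}\}$; since neither $\overline{u}\in D$ nor $\overline{b}\in C$ has a private neighbour, such a vertex $w$ satisfies $N_S(w)=\{\overline{u},\overline{b}\}$, so $w\in\{x,q\}$. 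In the typical situation only $x$ is undominated, and its third neighbour $r_x$ lies in $V\setminus S$; if $r_x$ is one of $a',\overline{a}',b'$, or has a neighbour in $S$, one adds a single vertex to $S_0$ and re-routes the matching through the private neighbours $a',\overline{a}',b'$ of the neighbouring $A$- and $BC$-pairs, obtaining a paired dominating set of size $|S|-1$, a contradiction. For example, if $x$ is adjacent to $b'$, one deletes $\{u,\overline{u},b,\overline{b}\}$ and adds $\{x,b'\}$, matched by the edge $xb'$; this set dominates $\{u,\overline{u},b,\overline{b}\}$ and has size $|S|-2$, so it is either a paired dominating set (contradicting minimality) or, by Lemma~\ref{lemma1}, it leaves undominated a vertex whose $S$-neighbourhood lies in $\{u,\overline{u},b,\overline{b}\}$ and hence, as $u$ and $b$ have no private neighbour besides $b'$, is a private neighbour of $\overline{u}$ or of $\overline{b}$ --- impossible --- unless that vertex is $q$, which reduces us to the case that $q$ is also adjacent to $\overline{b}$.

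\textit{The remaining cases and the main obstacle.} The residual configurations --- where both $x$ and $q$ have $S$-neighbourhood exactly $\{\overline{u},\overline{b}\}$, or where the second external neighbours of $\overline{u}$ and $\overline{b}$ coincide, or where $r_x$ is otherwise unhelpful --- are the hard part, precisely because $x$ cannot be matched cheaply: its only $S$-neighbours are the leaves $\overline{u}$ and $\overline{b}$, which are already matched to $u$ and $b$, so inserting $x$ into the new set either costs an extra vertex (matching $x$ to $r_x$) or frees $u$ or $b$ and cascades along an alternating path. The way around this is to choose the deletions together with the local re-matching --- pairing $u$ with $b$ and $x$ with $\overline{b}$, or pairing $u$ with $a$ and letting the private neighbours $a',\overline{a}'$ of the $A$ pair take up the slack so that a newly created pair becomes a $D$-pair, which strictly decreases $|A\cup B|$ and contradicts P2 --- while disposing by hand of the finitely many coincidences among $x,q,a',\overline{a}',b'$ and the second external neighbours of $\overline{u}$ and $\overline{b}$. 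In every branch one reaches a set contradicting the minimality of $S$, or P1, or P2, which proves the lemma; the crux throughout is this matching-preservation step around $x$.
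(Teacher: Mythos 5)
Your local analysis is correct and matches the paper's setup: $N(u)=\{\overline{u},a,b\}$ with $a\in A$, $b\in B$, so $u$ has no neighbour outside $S$; Lemma~\ref{lemma2} gives the vertex $x$ with $N_S(x)=\{\overline{u},\overline{b}\}$, $\overline{b}\in C$ is a leaf of $G[S]$, and $b$ has a private neighbour $b'$. But after that the argument never closes. Your entire final paragraph is an announcement, not a proof: the configurations in which $x$ (and possibly $q$) see only $\{\overline{u},\overline{b}\}$ in $S$, or in which the external neighbours coincide, are exactly the cases your deletion strategy cannot handle, and you explicitly defer them (``the hard part'', ``disposing by hand of the finitely many coincidences'', ``in every branch one reaches a set contradicting \dots''), without exhibiting a single one of these exchanges. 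There is also a parity slip along the way: a paired dominating set has even order, so ``a paired dominating set of size $|S|-1$'' cannot exist, and the intermediate contradiction you aim for in the $x b'$-adjacency case is therefore malformed as stated. As written, the proof has a genuine gap precisely at the step you yourself identify as the crux.

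The missing idea is that no smaller PDS is needed at all: the extremal condition P1 does the work through an \emph{equal-size} exchange. Replace the matching edges $\{u\overline{u},\,b\overline{b}\}$ with $\{bb',\,x\overline{u}\}$ (the paper's $v,v'$ are your $b,b'$), i.e.\ pass to $S'=S\setminus\{u,\overline{b}\}\cup\{b',x\}$ with the matching obtained from $M$ by this swap. Domination is preserved: $u$ is dominated by $b\in S'$, $\overline{b}$ by $x$, $u$ has no neighbour outside $S$, and $\overline{b}$ has no private neighbour, so every outside neighbour of $\overline{b}$ keeps a dominator in $S'$. Since $|S'|=|S|$, $S'$ is again a minimum PDS; but deleting $u$ (degree $3$ in $G[S]$) and the leaf $\overline{b}$ removes four edges of $G[S]$, while $x$ and $b'$ bring in at most three ($x\overline{u}$, $bb'$, and possibly $xb'$), so $\lambda(S')<\lambda(S)$, contradicting P1. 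This one-step exchange absorbs the troublesome vertex $x$ into the new set and thereby sidesteps the whole obstruction your construction runs into.
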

\begin{proof}
Let $S_u$ be a $D$ pair such that $d_A(u)=d_B(u)=d_D(u)=1$. Suppose $uv, uw\in E(G)$, where $v\in B$ and $w\in A$. Then according to Lemma~\ref{lemma2}, there is a vertex $x\in V\setminus S$, such that $N_S(x)=\{\overline{u},\overline{v}\}$. Replacing $\{u\overline{u}, v\overline{v}\}$ with $\{vv', x\overline{u}\}$ gives a PDS with smaller $\lambda$, a contradiction.
\end{proof}
According to Lemma~\ref{lemma9}, if $\mathcal{C}$ contains a $BC$ pair $S_v$, then there is no $A$ pair in $\mathcal{C}$, as $\mathcal{C}$ contains no liked $D$ pairs. 

\begin{figure}
\centering
\includegraphics[scale=0.5]{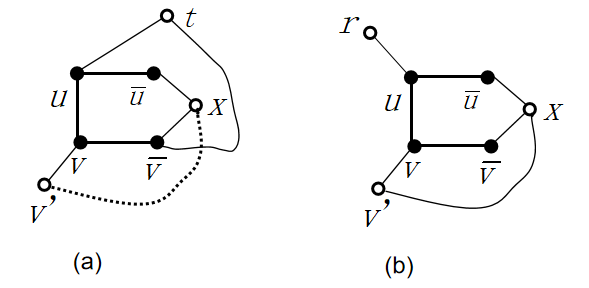}
\caption{C contains only a $D$ pair and a $BC$ pair}
 \label{fig:2}
\end{figure}

Suppose $\mathcal{C}$ only contains $S_u$ and $S_v$, where $uv\in E(G)$ and $v\in B$. See Figure~\ref{fig:2} for an illustration. 
According to Lemma~\ref{lemma2}, there is a vertex $x\in V\setminus  S$, such that $N_s(x)=\{\overline{u},\overline{v}\}$. If $xv'\not\in E(G)$, then replacing $\{u\overline{u}, v\overline{v}\}$ with $\{x\overline{u}, vv'\}$ gives a PDS with smaller $\lambda$, or there is a vertex $t\in V\setminus S$, such that $N_S(t)=\{u,\overline{v}\}$. In the later case, $f(S_u)+f(S_v)> f(v')+ f(x) +f(t)=3$. If $xv'\in E(G)$, then let $r\in V\setminus S$ be the last neighbor of $u$.    If  $r$ has a neighbor $w$ in $D\setminus S_u$. Then replacing $\{u\overline{u}, v\overline{v}, w\overline{w}\}$ with $\{rw, x\overline{v}\}$ gives a smaller PDS, or there is a vertex $t\in V\setminus S$, such that $N_S(t)=\{\overline{u},\overline{w}\}$. In the later case,  $f(S_u)+f(S_v)\geq f(x)+f(v')+f(ur)+f(\overline{u}t)+f(\overline{v})\geq 2+ 1/2 +1/6+ 1/3=3$.
 Otherwise, $N_D(r)\subseteq S_u$. Then $f(S_u)+f(S_v)\geq 3$, as $f(ru)\geq 2/3$ if $r\overline{u}\not\in E(G)$, and $f(ru)=f(r\overline{u})\geq 5/12$ otherwise.

Suppose $\mathcal{C}$ contains a $D$ pair $S_u$ and two $BC$ pairs $S_v$ and $S_w$, with $v,w\in B$ and $uv,uw\in E(G)$.
According to Lemma~\ref{lemma2}, there are two vertices $x,y\in V\setminus  S$, such that $N_s(x)=\{\overline{u},\overline{v}\}$, $N_s(y)=\{\overline{u},\overline{w}\}$. Then replacing $\{u\overline{u},v\overline{v}\}$ with $\{vv', \overline{u}x\}$ gives a PDS with smaller $\lambda$.

Now consider the case when $\mathcal{C}$ contains no $BC$ pair.
\begin{lemma}\label{lemma10}
Let $\mathcal{C}$ be a connected component in $G[S]$ that contains no linked $D$ pairs and no $BC$ pair.  Let $d$ be the number of $D$ pairs in $\mathcal{C}$, then $d\leq 2$.
\end{lemma}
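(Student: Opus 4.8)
The plan is to argue by contradiction: I would assume $\mathcal{C}$ contains three $D$ pairs $S_{u_1},S_{u_2},S_{u_3}$ and derive a contradiction with the minimality of $S$ (or with (P1)/(P2)).

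First I would pin down the local shape of a $D$ pair lying inside $\mathcal{C}$. Since $\mathcal{C}$ is not a single pair, each $S_{u_i}$ is adjacent in $G[S]$ to some other pair. By Lemma~\ref{lemma2}(2) one of $u_i,\overline{u_i}$ is a leaf of $G[S]$; name it $\overline{u_i}$, noting that if both were leaves then $S_{u_i}$ would be a component on its own and $d=1$. Hence $u_i$ is not a leaf, so besides the matching edge it has one or two further neighbours in $\mathcal{C}$; because $\mathcal{C}$ has no $BC$ pair and no two $D$ pairs are adjacent, each such neighbour lies in an $A$ pair. Thus $u_i$ is dominated in $S$ by a vertex of an $A$ pair, and, since $\overline{u_i}\in D$ has no private neighbour, each of the two neighbours of $\overline{u_i}$ in $V\setminus S$ has a further neighbour in $S$.

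The workhorse I would use is a simple exchange. Deleting a whole $D$ pair $S_{u_i}$ from $S$ leaves nothing undominated except $\overline{u_i}$: vertex $u_i$ keeps its $A$-neighbour, and since neither $u_i$ nor $\overline{u_i}$ has a private neighbour, no vertex of $V\setminus S$ is lost. So if I can pick two of the three pairs, say $S_{u_i},S_{u_j}$, together with $x\in N(\overline{u_i})\setminus S$ and $y\in N(\overline{u_j})\setminus S$ that are adjacent in $G$, then $S'=(S\setminus(S_{u_i}\cup S_{u_j}))\cup\{x,y\}$ is a dominating set, $G[S']$ inherits the matching of $G[S]$ with the two deleted matching edges replaced by $xy$, and $|S'|=|S|-2$, contradicting the minimality of $S$; a common neighbour $x=y$ of $\overline{u_i},\overline{u_j}$ whose third neighbour lies outside $S$ serves the same purpose. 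A second exchange I would keep ready handles the case that $u_i$ has two $A$-neighbours: then deleting $u_i$ and re-matching $\overline{u_i}$ to a neighbour $x\in N(\overline{u_i})\setminus S$ that has only two neighbours in $S$ preserves the size and the domination but strictly decreases $\lambda(S)$ (we remove $u_i$, of degree $3$ in $G[S]$, and add $x$, of degree $2$ in $G[S']$), contradicting (P1).

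The remaining task, which I expect to be the main obstacle, is to show that three $D$ pairs always make one of these exchanges available. I would settle this by a finite analysis of the at most six edges from $\overline{u_1},\overline{u_2},\overline{u_3}$ into $V\setminus S$ together with the way each $u_i$ meets the $A$ pairs: if two leaves have neighbours joined by an edge (or a suitable common neighbour) the first exchange applies; if some $u_i$ has two $A$-neighbours and $\overline{u_i}$ has a low-degree outside neighbour the second applies; and in the leftover configurations I would invoke Lemma~\ref{lemma1} with $X=S_{u_i}\cup S_{u_j}$ to produce a vertex $x$ with $N_S(x)\subseteq X$ and thereby a PDS of smaller order. Either way the choice of $S$ is contradicted, so $d\le 2$. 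What makes the case analysis tractable is the rigidity from the first step: a $D$ pair attaches only to $A$ pairs, and an $A$-pair vertex adjacent to a $D$ pair has exactly one private neighbour, hence no spare edges into $\mathcal{C}$.
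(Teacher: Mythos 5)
There is a genuine gap. Your proof stops exactly where the work is: after setting up the two candidate exchanges you write that ``the remaining task, which I expect to be the main obstacle, is to show that three $D$ pairs always make one of these exchanges available'' and defer it to an unspecified ``finite analysis.'' That analysis is the lemma; without it nothing is proved. Moreover, your first workhorse exchange is not sound as stated. The claim that deleting a $D$ pair ``loses no vertex of $V\setminus S$'' because neither vertex has a private neighbour is false: an outside vertex may have \emph{all} of its $S$-neighbours inside the deleted pair(s), e.g.\ $N_S(x)=\{u_i,\overline{u_i}\}$ or $N_S(x)\subseteq S_{u_i}\cup S_{u_j}$, and such a vertex becomes undominated. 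This is precisely the obstruction Lemma~\ref{lemma1} is built to handle, so every exchange must be stated as ``either a better PDS exists or Lemma~\ref{lemma1} yields a vertex $t$ with $N_S(t)\subseteq X$.'' In addition, your first exchange needs an edge between an outside neighbour of $\overline{u_i}$ and one of $\overline{u_j}$ (or a suitable common neighbour), which in general simply does not exist, and you give no argument that the leftover configurations can be closed off.

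The paper's proof supplies exactly the idea your plan lacks: it uses the Lemma~\ref{lemma1}-type alternative \emph{constructively} rather than as a fallback. With $d\geq 3$ one fixes a chain of three $D$ pairs $S_u,S_v,S_w$ joined through two $A$ pairs $S_p,S_q$ (every vertex of these $A$ pairs has a private neighbour by Lemma~\ref{lemma11}, which is the rigidity you also observed). Replacing $\{u\overline{u},p\overline{p},v\overline{v}\}$ with $\{up,\overline{p}\overline{v}\}$ either gives a smaller PDS or produces $t_1\in V\setminus S$ with $N_S(t_1)=\{\overline{u},v\}$; the symmetric replacement on the $w$-side produces $t_2$ with $N_S(t_2)=\{v,\overline{w}\}$. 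These forced vertices are then the lever for the final step: replacing $\{u\overline{u},v\overline{v}\}$ with $\{t_1\overline{u}\}$ yields a strictly smaller PDS, a contradiction. So the structure of your attempt (exchanges contradicting minimality or (P1), $D$ pairs attaching only to $A$ pairs) is in the right spirit, but the decisive mechanism --- extracting $t_1,t_2$ from failed replacements and then reusing them in a second replacement --- is missing, and without it the case analysis you postpone cannot be completed along the lines you describe.
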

\begin{proof}
Note that $\mathcal{C}$ contains no edge between $D$ pairs.
If $d\geq 3$, then $\mathcal{C}$ contains three $D$ pairs $S_u, S_v, S_w$, such that there are two $A$ pairs $S_p, S_q$ with $\{up, \overline{p}\overline{v}, \overline{q}\overline{v}, qw\}\subseteq E(G)$ and every vertex in  $\{p,\overline{p},q,\overline{q}\}$ has at least one private neighbor, according to Lemma~\ref{lemma11}.  Then replacing $\{u\overline{u}, p\overline{p}, v\overline{v}\}$ with $\{up,\overline{p}\overline{v}\}$ gives a smaller PDS, or there is a vertex $t_1\in V\setminus S$, such that $N_S(t_1)=\{\overline{u},v\}$, according to Lemma~\ref{lemma2}. Similarly, there is a vertex $t_2\in V\setminus S$, such that $N_S(t_2)=\{v,\overline{w}\}$. Then replacing $\{u\overline{u},v\overline{v}\}$ with $\{t_1\overline{u}\}$ gives a smaller PDS, a contradiction. Therefore, the number of $D$ pairs in $\mathcal{C}$ is at most 2.
\end{proof}
Let $d$ be the number of $D$ pairs in $\mathcal{C}$, in which there is no $BC$ pair. Then according to Lemma~\ref{lemma10}, $d\leq 2$ and there are at least $d-1$ $A$ pairs in $\mathcal{C}$. 

If $d=1$, then $\mathcal{C}$ contains $S_u$ and an $A$ pair $S_v$, because we are considering component that contains a linked $D$ pair. Then the three edges in $[S_u,V\setminus S]$ have total weight at least 3*1/3=1. Thus $f(S_u)+f(S_v)\geq 3$,  as $S_v$ has at least two private neighbors. 

If $d=2$, then $S_u$ is adjacent with an A pair $S_w$, which is adjacent with another $D$ pair $S_v$ in $\mathcal{C}$, where $uw\in E(G[S])$ and $\overline{v}\overline{w} \in E(G[S])$. Then replacing $\{u\overline{u},v\overline{v},w\overline{w}\}$ with $\{\overline{v}\overline{w},uw\}$ gives a smaller PDS, or there is a vertex $t_1\in V\setminus  S$, such that $N_S(t_1)= \{\overline{u},v\}$. In the later case, replacing $\{u\overline{u},v\overline{v}\}$ with $\{t_1\overline{u} \}$ gives a smaller PDS, or there is a vertex $t_2\in V\setminus  S$, such that $N_S(t_2)\subseteq  \{u,v,\overline{v}\}$. In the later case, replacing $\{u\overline{u},v\overline{v}\}$ with $\{t_1v \}$ gives a smaller PDS, or there is a vertex $t_3\in V\setminus  S$, such that $N_S(t_3)\subseteq  \{u,\overline{u},\overline{v}\}$. In the later case, $f(S_u)+f(S_v)+f(S_w)\geq f(w')+f(\overline{w}')+f(t_1)+f(t_2)+f(t_3)=5$.

\subsubsection{Component with Linked D pairs}

 If $S_u$ is adjacent with two D pairs $S_v$ and $S_w$, where $uv,uw\in E(G)$, then according to Lemma~\ref{lemma2}, there are two vertices $x, y\in V\setminus  S$, such that $N_S(x)=\{\overline{u},\overline{v}\}$, $N_S(y)=\{\overline{u},\overline{w}\}$.  In this case, replacing
$\{u\overline{u}, w\overline{w}\}$ with $\{\overline{w}y\}$ gives a smaller PDS. 

If $S_u$ is adjacent with a $D$ pair $S_v$ and an $A$ pair $S_w$, where $uv,uw\in E(G)$, then  according to Lemma~\ref{lemma2}, there is a vertex $x\in V\setminus  S$, such that $N_S(x)=\{\overline{u},\overline{v}\}$.
In this case, replacing $\{u\overline{u}\}$ with $\{x\overline{u}\}$ gives a PDS with smaller $\lambda$. 

If $S_u$ is adjacent with a $D$ pair $S_v$ and a $BC$ pair $S_w$, where $uv,uw\in E(G)$ and $w\in B$, then according to Lemma~\ref{lemma2}, there are two vertices $x, y\in V\setminus  S$, such that $N_S(x)=\{\overline{u},\overline{v}\}$, $N_S(y)=\{\overline{u},\overline{w}\}$. Then replacing $\{u\overline{u},w\overline{w}\}$ with $\{ww',y\overline{u}\}$ gives a PDS with smaller $\lambda$.

Now we only need to consider the case when $S_u$ is only adjacent with one $D$ pair $S_v$.
According to Lemma~\ref{lemma2}, there is a vertex $x\in V\setminus  S$, such that $N_S(x)=\{\overline{u},\overline{v}\}$. Besides $x\overline{u}$ and $x\overline{v}$, there are four other edges in $[S_u\cup S_v,V\setminus  S]$. To have $f(S_u)+f(S_v)\geq 3$, we need these four edges to have total weight at least 2. Note that for any vertex $x\in V\setminus  S$, if $u\in D$, then $f(xu)\in \{1/3,5/12,1/2,2/3,5/6,1\}$. Moreover, $f(xu)=1/3$ if and only if $|N_D(x)|=3$.

\begin{lemma}\label{lemma5}
Let $S_u$ and $S_v$ be two adjacent $D$ pairs such that $uv\in E(G)$. If $\overline{u}$ has a neighbor $t\in V\setminus S$, which has a neighbor $w\in B\cup D\setminus (S_u\cup S_v)$, then there is a vertex $t_1\in V\setminus  S$, such that $N_S(t_1)=\{u,\overline{w}\}$.
\end{lemma}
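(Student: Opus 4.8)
The plan is to extract $t_1$ from a single application of the exchange lemma (Lemma~\ref{lemma1}), once the local picture around the two $D$-pairs is recorded. First I would note that, since $S_u,S_v$ are $D$-pairs with $uv\in E(G)$, Lemma~\ref{lemma2}(3) gives the induced path $\overline{u}-u-v-\overline{v}$ in $G[S]$ together with the vertex $x\in V\setminus S$ satisfying $N_S(x)=\{\overline{u},\overline{v}\}$. Since $u$ has two $S$-neighbours ($\overline{u}$ and $v$) it is not a leaf of $G[S]$, so Lemma~\ref{lemma2}(2) forces $\overline{u}$ to be a leaf of $G[S]$; hence the only neighbours of $\overline{u}$ outside $S$ are $x$ and $t$, and $x\neq t$ — otherwise $t=x$ would give $w\in N_S(x)=\{\overline{u},\overline{v}\}$, contradicting $w\notin S_u\cup S_v$.

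Next I would perform the exchange that replaces the matching edges $\{u\overline{u},\,w\overline{w}\}$ by the single edge $\{tw\}$; in the notation of Lemma~\ref{lemma1} this takes $X=\{u,\overline{u},\overline{w}\}\subseteq S$, $Y=\{t\}\subseteq V\setminus S$ and $S'=S\setminus X\cup Y$. The three hypotheses of Lemma~\ref{lemma1} are then routine to check: $|S'|=|S|-2<|S|$; the edge set $(M\setminus\{u\overline{u},w\overline{w}\})\cup\{tw\}$ is a perfect matching of $G[S']$; and $S'$ dominates $X$, because $u$ is dominated by $v$, $\overline{u}$ by $t$, and $\overline{w}$ by $w$, all of which lie in $S'$. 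Lemma~\ref{lemma1} then yields a vertex $t_1\in V\setminus(S\cup\{t\})$ with $N_S(t_1)\subseteq\{u,\overline{u},\overline{w}\}$.

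Finally I would sharpen $N_S(t_1)$ to $\{u,\overline{w}\}$. If $t_1$ were adjacent to $\overline{u}$, then since the only neighbours of $\overline{u}$ outside $S$ are $x$ and $t$ and $t_1\neq t$, we would get $t_1=x$; but $N_S(x)=\{\overline{u},\overline{v}\}\not\subseteq\{u,\overline{u},\overline{w}\}$ (as $\overline{v}\notin S_u\cup S_w$), a contradiction. So $\overline{u}\notin N_S(t_1)$, hence $\emptyset\neq N_S(t_1)\subseteq\{u,\overline{w}\}$. This set is not $\{u\}$, since $u\in D$ has no private neighbour, and it is not $\{\overline{w}\}$, since $\overline{w}$ has no private neighbour (it lies in $C$ if $w\in B$ and in $D$ if $w\in D$); therefore $N_S(t_1)=\{u,\overline{w}\}$. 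The only genuine decision in the argument is the form of the exchange — breaking $u\overline{u}$ and $w\overline{w}$ and re-pairing $w$ with $t$ is precisely what leaves the three-element set $\{u,\overline{u},\overline{w}\}$, from which the single problematic candidate $x$ is immediately excluded; I would expect the bookkeeping in verifying the hypotheses of Lemma~\ref{lemma1} (especially that $G[S']$ still has a perfect matching) to be the most error-prone part, though there is no conceptual difficulty.
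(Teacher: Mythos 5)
Your proposal is correct and follows essentially the same route as the paper: the paper's proof is exactly the exchange replacing $\{u\overline{u},w\overline{w}\}$ with $\{tw\}$ and invoking Lemma~\ref{lemma1} against the minimality of $S$. You merely spell out the verification the paper leaves implicit (the perfect matching and domination of $X=\{u,\overline{u},\overline{w}\}$, and the elimination of $\overline{u}$, $\{u\}$ and $\{\overline{w}\}$ as possibilities for $N_S(t_1)$), and these details are all sound.
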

\begin{proof}
According to Lemma~\ref{lemma1}, replacing $\{u\overline{u},w\overline{w}\}$ with $\{tw\}$ gives a smaller PDS, or there is a vertex $t_1\in V\setminus  S$, such that $N_S(t_1)=\{u,\overline{w}\}$.
\end{proof}
\begin{lemma}\label{lemma6}
Let $S_u$ and $S_v$ be two adjacent $D$ pairs such that $uv\in E(G)$. If $u$ has a neighbor $r\in V\setminus  S$ which has a neighbor $w\in B\cup D \setminus (S_u\cup S_v)$, then there is a vertex $t_1\in V\setminus  S$, such that $N_S(t_1)\subseteq \{\overline{u},v,\overline{w}\}$.
\end{lemma}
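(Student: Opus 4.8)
The plan is to follow the same scheme as Lemma~\ref{lemma5}: apply the exchange Lemma~\ref{lemma1} to a local modification of $S$. The difference is that the exchange will have to be a little larger, because removing $u$ (rather than $\overline{u}$) leaves a different vertex temporarily undominated.

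First I would record the local structure. Since $S_u$ and $S_v$ are adjacent $D$-pairs with $uv\in E(G)$, Lemma~\ref{lemma2}(3) tells us that $G[S_u\cup S_v]$ is the path $\overline{u}-u-v-\overline{v}$ and that there is a vertex $x\in V\setminus S$ with $N_S(x)=\{\overline{u},\overline{v}\}$; in particular $x\sim\overline{u}$ and $x\sim\overline{v}$. Because $G$ is cubic and $u$ is already adjacent to the two $S$-vertices $\overline{u}$ and $v$, the hypothesis that $u$ has a neighbour $r\in V\setminus S$ forces $N(u)=\{\overline{u},v,r\}$ exactly (this is the fact I will use at the very end). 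Writing $\overline{w}$ for the $M$-partner of $w$: since $w\in B\cup D$ and $w\notin S_u\cup S_v$, both $w$ and $\overline{w}$ lie in $S\setminus(S_u\cup S_v)$, and $w\overline{w}\in M$; also $rw\in E(G)$ by hypothesis.

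Now the exchange. I would replace $\{u\overline{u},\,v\overline{v},\,w\overline{w}\}$ with $\{x\overline{v},\,rw\}$; in the notation of Lemma~\ref{lemma1} this means $X=\{u,\overline{u},v,\overline{w}\}$, $Y=\{x,r\}$, $S'=S\setminus X\cup Y$, with candidate matching $M'=(M\setminus\{u\overline{u},v\overline{v},w\overline{w}\})\cup\{x\overline{v},rw\}$. Then I would check the three hypotheses of Lemma~\ref{lemma1}: (i) $x\ne r$, since $r$ is adjacent to $u$ while $x$ is not, and the four vertices of $X$ are distinct, so $|S'|=|S|-2<|S|$; (ii) $M'$ is a perfect matching of $G[S']$: all of its edges belong to $E(G)$ (for $x\overline{v}$ by Lemma~\ref{lemma2}, for $rw$ by hypothesis), the two new edges are vertex-disjoint from each other and from $M\setminus\{u\overline{u},v\overline{v},w\overline{w}\}$ (using $w\ne\overline{v}$ and $x,r\notin S$), and $M'$ covers exactly $S'$; (iii) $S'$ dominates $X$, because $u$ is dominated by $r\in S'$, $\overline{u}$ by $x\in S'$, $v$ by its old partner $\overline{v}\in S'$, and $\overline{w}$ by $w\in S'$. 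The one spot where this exchange must be larger than the one in Lemma~\ref{lemma5} is item~(iii) for $\overline{u}$: once $u$ is deleted, the only vertex visibly left to dominate the $G[S]$-leaf $\overline{u}$ is $x$, so $x$ must enter $S'$; but then $x$ can only be rematched to $\overline{v}$, which in turn forces $v$ to be dropped from $S'$. Getting this little cascade right is the only delicate bookkeeping, and I expect it to be the main (mild) obstacle.

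With (i)--(iii) in place, Lemma~\ref{lemma1} yields a vertex $t_1\in V\setminus(S\cup\{x,r\})$ with $N_S(t_1)\subseteq X=\{u,\overline{u},v,\overline{w}\}$. To finish, I would discard $u$ from this list: since $N(u)=\{\overline{u},v,r\}$ and $t_1$ is neither $r$ nor any vertex of $S$, $t_1\not\sim u$; hence $N_S(t_1)\subseteq\{\overline{u},v,\overline{w}\}$, which is exactly the assertion of the lemma.
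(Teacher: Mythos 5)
Your proof is correct and follows essentially the same route as the paper: the paper likewise replaces $\{u\overline{u},v\overline{v},w\overline{w}\}$ with $\{rw,x\overline{v}\}$ and invokes Lemma~\ref{lemma1} to produce $t_1$. The only difference is that you spell out the bookkeeping the paper leaves implicit, in particular why $u$ can be dropped from the conclusion set (since $N(u)=\{\overline{u},v,r\}$ and $t_1\notin S\cup\{x,r\}$), which is a welcome clarification rather than a deviation.
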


\begin{proof}
According to Lemma~\ref{lemma1}, replacing $\{ u\overline{u},w\overline{w},v\overline{v}\}$ with $\{rw,x\overline{v}\}$ gives a smaller PDS, or there is a vertex vertex $t_1\in V\setminus  S$, such that $N_S(t_1)\subseteq \{\overline{u},v,\overline{w}\}$.
\end{proof}

 If the total weight of $S_u$ and $S_v$ is less than 3, then at least one edge in $[S_u\cup S_v,V\setminus S]$ should have weight  1/3 or 5/12. We discuss all possible cases accordingly. Let $t$ be the neighbor of $\overline{u}$ in $V\setminus S$, and $r$ the neighbor of $u$ in $V\setminus S$.  Symmetrically, we just need to consider the cases when $f(\overline{u}t),f(ur)\in \{1/3,5/12\}$.

\begin{figure}
\centering
\includegraphics[scale=0.5]{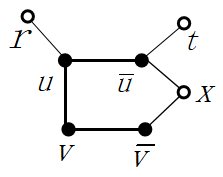}
\caption{Two linked $D$ pairs}
 \label{fig:3}
\end{figure}

\textbf{Case 1:} $f(\overline{u}t)=1/3$.

\textbf{Subcase 1.1:}  All three neighbors of $t$ are in $S_u\cup S_v$.

If $N_S(t)=\{ \overline{u},v,\overline{v} \}$, then replacing $\{u\overline{u},v\overline{v}\}$ with $\{t\overline{u}\}$ gives a smaller PDS.

If $N_S(t)=\{ u,\overline{u},\overline{v} \}$, then replacing $\{u\overline{u},v\overline{v}\}$ with $\{t\overline{v}\}$ gives a smaller PDS.

If $N_S(t)=\{ u,\overline{u},v \}$, then replacing $\{u\overline{u},v\overline{v}\}$ with $\{ut,x\overline{v}\}$ gives a PDS with smaller $\lambda$.

\textbf{Subcase 1.2:} Two neighbors of $t$ are in $S_u \cup S_v$.

Then $t$ has a $D$ neighbor $w\not\in S_u\cup S_v$.

If $N_S(t)=\{ u,\overline{u},w \}$, then replacing $\{u\overline{u},w\overline{w}\}$ with $\{tw\}$ gives a smaller PDS.

If $N_S(t)=\{ \overline{u},\overline{v},w \}$, then according to Lemma~\ref{lemma5}, there are two vertices $t_1,t_2\in V\setminus  S$, such that $N_S(t_1)=\{u,\overline{w}\}$, $N_S(t_2)=\{v,\overline{w}\}$. In this case, replacing $\{u\overline{u},v\overline{v},w\overline{w}\}$ with $\{x\overline{u},t_2\overline{w}\}$ gives a smaller PDS.

If $N_S(t)=\{ \overline{u},v,w \}$, then according to Lemma~\ref{lemma5}, there is a vertex $t_1\in V\setminus S$, such that $N_S(t_1)=\{u,\overline{w}\}$. If $xt_1\not\in E(G)$, then replacing $\{u\overline{u},v\overline{v},w\overline{w}\}$ with $\{ut_1,x\overline{v},tw\}$ gives a PDS with smaller $\lambda$. Otherwise, replacing $\{u\overline{u},v\overline{v},w\overline{w}\}$ with $\{xt_1, tw\} $ gives a smaller PDS, or there is a vertex $t_2\in V\setminus S$, such that $N_S(t_2)=\{\overline{v},\overline{w}\}$. In the later case, replacing $\{u\overline{u},v\overline{v},w\overline{w}\}$ with $\{\overline{w}t_2, t\overline{u}\} $ gives a smaller PDS. 

\textbf{Subcase 1.3:} One neighbor of $t$ is in $S_u\cup S_v$.

Let $w_1, w_2$ be the other two $D$ neighbors of $t$. Then there is an $i\in [2]$, such that there is no vertex $y\in V\setminus  S$, satisfying $N_S(y)=\{u,\overline{w_i}\}$, because $u$ already has degree 2. Thus replacing $\{u\overline{u}, w\overline{w}\}$ by $\{tw_i\}$ gives a smaller PDS.

\textbf{Case 2:} $f(ur)=1/3$.

\textbf{Subcase 2.1:} Three neighbors of $r$ are in $S_u\cup S_v$.


In this case, $r$ must be adjacent with $\overline{u}$ or $\overline{v}$, which we already discussed in Case 1, as $f(\overline{u}t)=1/3$ or $f(\overline{v}t)=1/3$.

\textbf{Subcase 2.2:} Two neighbors of $r$ are in $S_u\cup S_v$.

We only need to consider the case when $N_S(r)=\{u,v,w\}$, where $w\in D\setminus (S_u\cup S_v)$. Replacing $\{u\overline{u}, v\overline{v},w\overline{w}\}$ with $\{wr,x\overline{v}\}$ gives a smaller PDS, or there is a vertex $t_1\in V\setminus  S$, such that $N_S(t_1)=\{\overline{w},\overline{u}\}$. In the later case, replacing $\{u\overline{u},w\overline{w}\}$ with $\{t_1\overline{w}\}$ gives a smaller PDS.

\textbf{Subcase 2.3:} One neighbor of $r$ is in $S_u\cup S_v$.

Let $w_1,w_2$ be the other two $D$ neighbors of $r$. According to Lemma~\ref{lemma6}, there are two vertex $t_1,t_2\in V\setminus  S$, such that $N_S(t_1)\subseteq \{\overline{w_1}, \overline{u},v \}$, $N_S(t_2)\subseteq \{\overline{w_2}, \overline{u},v \}$.

If $N_S(t_1)= \{\overline{w_1}, \overline{u},v \}$, then $f(\overline{u}t_1)=1/3$, which we already discussed in Subcase 1.2.

If $N_S(t_1)= \{\overline{w_1}, \overline{u}\}$, then replacing $\{ w_1\overline{w_1},u\overline{u} \}$ with $\{t_1\overline{w_1}\}$ gives a smaller PDS.

If $N_S(t_1)= \{\overline{w_1}, v \}$, then $N_S(t_2)= \{\overline{w_2}, \overline{u} \}$. This is exactly the same with the above case.

If $N_S(t_1)= \{ \overline{u},v \}$, then $t_1$ and $x$ already provide weight 2 for $S_u\cup S_v$. Consider the neighbor $p$ of $\overline{v}$. If $f(p\overline{v})\geq 2/3$, then $f(S_u)+f(S_v)\geq 3$. Thus we just need to consider the case when $p$ has at least two $D$ neighbors. Let $w_3\neq \overline{v}$ be a $D$ neighbor of $p$. Then replacing $\{v\overline{v}, w_3\overline{w_3} \}$ with $\{pw_3\}$ gives a smaller PDS.

\textbf{Case 3:} $f(\overline{u}t)=5/12$.

Note that in this case, $t$ has two neighbors in $D$.

\textbf{Subcase 3.1:} Two $D$ neighbors of $t$ are in $S_u\cup S_v$.

Let $w$ be the other neighbor of $t$, which belongs to $B\cup C$.

If $N_D(t)=\{u,\overline{u}\}$, and $w\in B$, then replacing $\{u\overline{u},w\overline{w} \}$ with $\{tw \}$ gives a smaller PDS.

If $N_D(t)=\{u,\overline{u}\}$, and $w\in C$, then replacing $\{u\overline{u},w\overline{w}\}$ with $\{t\overline{u}, \overline{w}\overline{w}'\}$ gives a PDS with smaller $\lambda$.

If $N_D(t)=\{v,\overline{u}\}$, and $w\in B$, then replacing $\{u\overline{u}, w\overline{w} \}$ with $\{tw\}$ give a smaller PDS, or there is a vertex $t_1\in V\setminus  S$, such that $N_S(t_1)=\{u,\overline{w} \}$. In the later case, if $xt_1\not\in E(G)$, then replacing $\{ u\overline{u},v\overline{v},w\overline{w}\}$ with $\{x\overline{v}, ut_1,tw\}$ gives a PDS with smaller $\lambda$. Otherwise, replacing $\{ u\overline{u},v\overline{v},w\overline{w}\}$ with $\{xt_1,tw \}$ gives a smaller PDS, or there is a vertex $t_2\in V\setminus  S$, such that $N_S(t_2)=\{\overline{v},\overline{w}\}$. In the later case, replacing $\{ u\overline{u},v\overline{v},w\overline{w}\}$ with $\{ut_1, tw, \overline{v}t_2\}$ gives a PDS with smaller $\lambda$.

If $N_S(t)=\{v,\overline{u}\}$ and $w\in C$, then suppose $p\in V\setminus S$ is the last neighbor of $\overline{v}$, such that $f(\overline{v}p)=5/12$. If $p$ has a $D$ neighbor $w_2\neq u$, then replacing $\{v\overline{v},w_2\overline{w_2}\} $ with $\{pw_2\}$ gives a smaller PDS.

Otherwise, $N_S(p)=\{u,\overline{v},w_2\}$ and $w_2\in C$. If $w_2=w$, then replacing $\{ u\overline{u}, v\overline{v}, w\overline{w}\}$ with $\{\overline{u}t, \overline{v}p, \overline{w}\overline{w}' \}$ gives a PDS with smaller $\lambda$. If $w_2\neq w$ and $\overline{w}'\overline{w_2}'\not\in E(G)$, then replacing $\{u\overline{u}, v\overline{v}, w\overline{w}, w_2\overline{w_2} \}$ with $\{\overline{u}t, \overline{v}p, \overline{w}\overline{w}', \overline{w_2}\overline{w_2}' \}$ gives a PDS with smaller $\lambda$, or there is a vertex $t_3\in V\setminus  S$, such that $N_S(t_3)=\{w,w_2 \}$. In the later case, $f(S_u)+f(S_v)+f(S_{w})+f(S_{w_2})\geq 6$.  If $w_2\neq w$ and $w'\overline{w_2}'\in E(G)$, then  replacing $\{u\overline{u}, v\overline{v}, w\overline{w}, w_2\overline{w_2} \}$ with $\{\overline{u}t, p\overline{w_2}, \overline{w}\overline{w}'\}$ gives a smaller PDS, or there is a vertex $t_1\in V\setminus S$, such that $N_S(t_1)=\{w,\overline{w_2}\}$. In the later case,  $f(S_u)+f(S_v)+f(S_{w})+f(S_{w_2})\geq f(x)+f(t)+f(p)+f(\overline{w}')+f(\overline{w_2}')+f(t_1)= 6$.   {It is safe to compute the weight of these four pairs together, as all vertices in $S_u\cup S_v\cup S_w\cup S_{w_2}$, except $\overline{w}$ and $\overline{w_2}$, have degree 3.}


If $N_D(t)=\{\overline{u},\overline{v}\}$ and $w\in B$, then  replacing $\{u\overline{u}, w\overline{w} \}$ with $\{tw\}$ gives a smaller PDS, or there is a vertex $t_1\in V\setminus  S$, such that $N_S(t_1)=\{u,\overline{w} \}$. In the later case, replacing $\{v\overline{v}, w\overline{w} \}$ with $\{tw\}$ gives a smaller PDS, or there is a vertex $t_2\in V\setminus  S$, such that $N_S(t_2)=\{v,\overline{w} \}$. In the later case, $f(S_u\cup S_v)\geq f(x)+f(\overline{u}t)+f(\overline{v}t)+f(ut_1)+f(vt_2)=1+3*5/6>3$. 

If $N_D(t)=\{\overline{u},\overline{v}\}$ and $w\in C$, then $S_u$ and $S_v$ already have weight 1+5/6. Let $N(u)=\{\overline{u},v,r\}$, $N(v)=\{u,\overline{v},p\}$. If both $r$ and $p$ have exactly one $D$ neighbor, then $f(ur)\geq 2/3$ and $f(vp)\geq 2/3$. Thus $f(S_u)+f(S_v)\geq 1+5/6+4/3>3$.  Otherwise, $r$ or $p$ has another $D$ neighbor.

If $r$ has a $D$ neighbor $w_1\not\in S_u\cup S_v$. Then according to Lemma~\ref{lemma6}, there is a vertex $t_2\in V\setminus S$, such that $N_S(t_2)\subseteq\{\overline{u},v,\overline{w_1}\}$. As $\overline{u}$ already has degree 3, we have $N_S(t_2)=\{v,\overline{w_1}\}$. In this case, replacing $\{u\overline{u}, v\overline{v}, w_1\overline{w_1}\}$ with $\{t_2\overline{w_1}, x\overline{u}\}$ gives a smaller PDS. The same argument works for the case when $p$ has a $D$ neighbor $w_1\not\in S_u\cup S_v$.

Now we need to consider the case when both $r$ and $p$ have no $D$ neighbor outside of $S_u\cup S_v$. This is only possible when $r=p$. Let $w_1$ be the other neighbor of $r\in S$. If $w_1\in B$, then replacing $\{u\overline{u},v\overline{v},w_1\overline{w_1}\}$ with $\{rw_1, x\overline{u}\}$ gives a smaller PDS. If $w_1\in C$ and $x\overline{w_1}'\not\in E(G)$, then replacing $\{u\overline{u},v\overline{v},w_1\overline{w_1}\}$ with $\{vr, x\overline{u}, \overline{w_1}\overline{w_1}'\}$ gives a PDS with smaller $\lambda$. Otherwise, $w_1\in C$ and $x\overline{w_1}\in E(G)$, then  replacing $\{ u\overline{u}, v\overline{v}, w_1\overline{w_1}\}$ with $\{x\overline{u},rw_1 \}$ gives a smaller PDS. 


\textbf{Subcase 3.2:} One $D$ neighbor of $t$ is in $S_u\cup S_v$.

Let $w\not\in S_u\cup S_v$ be the other $D$ neighbor of $t$. According to Lemma~\ref{lemma5}, there is a vertex $t_1\in V\setminus  S$, such that $N_S(t_1)=\{u,\overline{w}\}$. If $tt_1\in E(G)$, then replacing $\{u\overline{u},w\overline{w}\}$ with $\{ tw\}$ gives a smaller PDS. 
Otherwise, replacing $\{ u\overline{u},w\overline{w}\}$ with $\{t\overline{u},t_1\overline{w}\}$ gives a PDS with smaller $\lambda$. 

\textbf{Case 4:} $f(ur)=5/12$.

Note that in this case, $r$ has two neighbors in $D$, one neighbor $w$ in $B\cup C$.

\textbf{Subcase 4.1:} Two $D$ neighbors of $r$ are in $S_u\cup S_v$.

As we already considered the case when the edges in $[\overline{u},V\setminus S]$ have weight 5/12, we only need to consider the case when $N_D(r)=\{u,v\}$.

If $w\in B$, then according to Lemma~\ref{lemma5}, there are two vertices $t_1,t_2\in V\setminus  S$, such that $N_S(t_1)=\{\overline{u},\overline{w}\}$, $N_S(t_2)=\{\overline{v},\overline{w}\}$. If $w't_{i}\not\in E(G)$, for any $i\in [2]$, then replacing $\{u\overline{u}, w\overline{w}  \}$ with $\{ww',t_{i}\overline{u}\}$ gives a PDS with smaller $\lambda$. Otherwise, $w't_1,w't_2\in E(G)$. Then replacing $\{u\overline{u},v\overline{v},w\overline{w}\}$ with $\{rv, x\overline{u},t_2w'\}$ gives a PDS with smaller $\lambda$. 

If $w\in C$ and $\overline{w}'x\not\in E(G)$, then replacing $\{ u\overline{u},v\overline{v}, w\overline{w} \}$ with $\{\overline{w}\overline{w}',rv,\overline{u}x\}$  gives a PDS with smaller $\lambda$, or there is a vertex $t_1\in V\setminus  S$, such that $N_S(t_1)=\{w,\overline{v}\}$. In the later case, replacing $\{ u\overline{u},v\overline{v}, w\overline{w} \}$ with $\{\overline{w}\overline{w}',x\overline{v},ru\}$ gives a PDS with smaller $\lambda$. 

If $w\in C$ and $\overline{w}'x\in E(G)$, then replacing $\{ u\overline{u},v\overline{v}, w\overline{w} \}$ with $\{rw,\overline{u}x\}$  gives a smaller PDS, or there is a vertex $t_1\in V\setminus S$, such that $N_S(t_1)=\{\overline{v},\overline{w} \}$. In the later case,  replacing $\{ u\overline{u},v\overline{v}, w\overline{w} \}$ with $\{rw,\overline{v}x\}$  gives a smaller PDS.

\textbf{Subcase 4.2:} One $D$ neighbor of $r$ is in $S_u\cup S_v$.

Let $w\not\in S_u\cup S_v$ be the other $D$ neighbor of $r$. According to Lemma~\ref{lemma5}, there is a vertex $t_1\in V\setminus  S$, such that $N_S(t_1)\subseteq \{ \overline{u}, v, \overline{w}\}$. As we already considered the case when the edge in $[\overline{u},V\setminus S]$ has weight 1/3. We only need to consider the cases when $N_S(t_1)=\{ \overline{u}, \overline{w} \}$, $N_S(t_1)=\{ v, \overline{w}\}$ or $N_S(t_1)=\{ \overline{u},v\}$.

If $N_S(t_1)=\{ \overline{u}, \overline{w} \}$, then replacing $\{u\overline{u},w\overline{w} \}$ with $\{rw, \overline{u}t_1 \}$ gives a PDS with smaller $\lambda$. 


If $N_S(t_1)=\{ v, \overline{w}\}$, then replacing $\{u\overline{u},v\overline{v},w\overline{w}\}$ with $\{\overline{w}t_1,x\overline{u} \}$ gives a smaller PDS, or there is a vertex $t_2\in V\setminus S$, such that $N_S(t_2)=\{w,\overline{v}\}$. In the later case, $S_u$ and $S_v$ get the weight of  $ur,vt_1, \overline{v}t_2$ and $x$, which sums to 2+5/12. Let $t$ be the neighbor of $\overline{u}$. As we already considered the case when $f(\overline{u}t)\in \{1/3,5/12\}$, we only need consider the case when $f(\overline{v}p)>5/12$, i.e. $f(\overline{v}p)\geq 1/2$. If $f(\overline{u}t)=1/2$, then $t$ has another $D$ neighbor $w_2$. Then replacing $\{u\overline{u},w_2\overline{w_2} \}$ with $\{tw_2\} $ gives a smaller PDS. Thus, $f(\overline{u}t)\geq 2/3$, and so $f(S_u)+f(S_v)\geq 5/12+2+2/3>3$.


If $N_S(t_1)=\{ \overline{u},v\}$, then let $p\in V\setminus S$ be the last neighbor of $\overline{v}$. As we already considered the case when $f(\overline{v}p)\in \{1/3,5/12\}$, we only need consider the case when $f(\overline{v}p)>5/12$, i.e. $f(\overline{v}p)\geq 1/2$. If $f(\overline{v}p)=1/2$, then $p$ has another $D$ neighbor $w_2$. Then replacing $\{v\overline{v},w_2\overline{w_2} \}$ with $\{pw_2\} $ gives a smaller PDS. Thus, $f(\overline{v}p)\geq 2/3$, and so $f(S_u)+f(S_v)\geq 5/12+2+2/3>3$. 

\subsection{Solo $D$ Pair}
Let $S_u$ be a solo $D$ pair. There are four edges in $[S_u, V\setminus S]$.  Note that for every edge $xu$ with $x\in V\setminus  S$, $u\in D$, we have $f(xu)\in\{ 1/3,5/12,1/2,2/3,5/6,1\}$.
Because $1/3+5/12=3/4$ and $1/3*3+1/2=3/2$, there are only two possible cases for $f(S_u)$ to be less than $3/2$. Either all the four edges have weight $1/3$, or three edges have weight $1/3$ and one edge have weight 5/12. We show both cases are not possible.

We first discuss the case when $u$ and $\overline{u}$ have a common neighbor in $V\setminus  S$.

\textbf{Case 1:} $u$ and $\overline{u}$ have a common neighbor $x\in V\setminus  S$.


If there are at least 2 edges with weight at least 5/12 incident with $S_u$, then $f(S_u)\geq 3/2$. Thus, we only need to discuss the case when $|N_D(x)|=3$.

Suppose $|N_D(x)|=3$, and $v\in D$ is the other neighbor of $x$. Then replacing $\{u\overline{u},v\overline{v}\}$ with $\{xv\}$ gives a smaller PDS, or there is a vertex $y\in V\setminus  S$, such that $N_S(y)\subseteq \{u,\overline{u},\overline{v}\}$. For the same reason, we only need to consider the case that $N_S(y)= \{u,\overline{u},\overline{v}\}$.

If $v$(or $\overline{v}$) has degree at least 2 in $G[S]$, then replacing $\{u\overline{u},v\overline{v}\}$ with $\{\overline{u}y\}$(or $\{ xu\}$, respectively) gives a smaller PDS. Thus
we only need consider the case when $S_v$ is also a solo $D$ pair. Now we discuss whether $v$ and $\overline{v}$ have a common neighbor in $V\setminus  S$.


\textbf{Subcase 1.1:} $v$ and $\overline{v}$ have a common neighbor $z\in V\setminus  S$, see Figure~\ref{fig:linkedD} for an illustration.

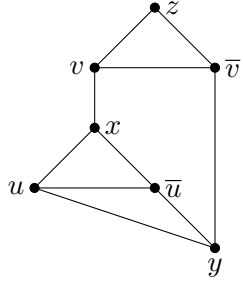
\begin{figure}
\centering
\begin{tikzpicture}[scale = 0.4]

\draw [](2,0)node[below]{$y$};
\draw [](-4,2)node[left]{$u$};
\draw [](0,2)node[right]{$\overline{u}$};
\draw [](-2,4)node[right]{$x$};
\draw [](-2,6)node[left]{$v$};
\draw [](2,6)node[right]{$\overline{v}$};
\draw [](0,8)node[right]{$z$};

\draw [](2,0)[fill]circle[radius=0.15];
\draw [](-4,2)[fill]circle[radius=0.15];
\draw [](0,2)[fill]circle[radius=0.15];
\draw [](-2,4)[fill]circle[radius=0.15];
\draw [](-2,6)[fill]circle[radius=0.15];
\draw [](2,6)[fill]circle[radius=0.15];
\draw [](0,8)[fill]circle[radius=0.15];

\draw [](2,0)--(-4,2);
\draw [](2,0)--(0,2);
\draw [](2,0)--(2,6);
\draw [](-4,2)--(0,2);
\draw [](-4,2)--(-2,4);
\draw [](-2,4)--(-2,6);
\draw [](-2,6)--(2,6);
\draw [](-2,6)--(0,8);
\draw [](2,6)--(0,8);
\draw [](-2,4)--(0,2);
\end{tikzpicture}
\caption{Subcase 1.1}
 \label{fig:linkedD}
\end{figure}

If $N_S(z)=\{v,\overline{v}\}$ or $N_S(z)\cap A\neq \emptyset$, then $f(S_u)+f(S_v)=3$ and we are done. Otherwise, suppose $d_S(z)=3$ and $w\in S\setminus A$ is the third neighbor of $z$. If $w\not\in C$, then replacing $\{w\overline{w}, v\overline{v}, u\overline{u} \}$ with $\{zw, x\overline{u}\}$ gives a smaller PDS. Otherwise, $w\in C, \overline{w}\in B$ and $\overline{w}$ has  a private neighbor $\overline{w}'$. 


In the later case, let $r$ be the neighbor of $w$ in $V\setminus S$. If $d_S(r)=3$, then replacing $\{v\overline{v}, w\overline{w}\} $ with $\{ \overline{w}\overline{w}', z\overline{v} \} $ gives a PDS with smaller $A\cup B$.

Otherwise, $d_S(r)=2$, as $r$ is not private neighbor of $w\in C$. Let $t\in S$ be the other neighbor of $r$. If $t\in A\cup B\cup C$, then $f(rw)\geq 1/2$, and $f(S_u)+ f(S_v)+ f(S_w)\geq f(x)+f(y)+f(z)+f(\overline{w}')+f(rw)\geq 4.5$. {It is safe to compute the total weight of these three pairs together, as all vertices in $S_u\cup S_v\cup S_w$, except $\overline{w}$, have degree 3. } Otherwise, $t\in D$.  If $S_t$ is a solo $D$ pair, then $f(S_u)+f(S_v)+f(S_w)+f(S_t)\geq f(x)+f(y)+f(z)+f(\overline{w}')+f(r)+1/3*3\geq 6$. 
Otherwise, $S_t$ is a linked $D$ pair.

\textbf{Subcase 1.1.1:} $\overline{t}$ has degree at least 2 in $S$.

If $r\overline{w}'\not\in E(G)$, then replacing $\{w\overline{w},t\overline{t}\}$ with $\{rt,\overline{w}\overline{w}' \}$ gives a PDS with smaller $\lambda$. If $r\overline{w}'\in E(G)$, then replacing $\{u\overline{u}, v\overline{v},w\overline{w},t\overline{t}\}$ with $\{rt,zw, x\overline{u}\}$ gives a smaller PDS, or there is a vertex $q\in V\setminus S$, such that $N_S(q)=\{\overline{t},\overline{w}\}$. In the later case, replacing $\{ v\overline{v},w\overline{w},t\overline{t}\}$ with $\{rt,z\overline{v}, \overline{w}q\}$ gives a PDS with smaller $\lambda$. 

\textbf{Subcase 1.1.2:} $t$ has degree at least 2 in $S$.

{ If both edges in $[\overline{t},V\setminus S]$ have weight at least 5/12, then $f(S_u)+f(S_v)+f(S_w)+f(S_t)\geq f(x)+f(y)+f(z)+f(\overline{w}')+f(r)+1/6+5/12*2=6$. Otherwise, at least one neighbor $q$ of $t$ has another $D$ neighbor $w_1$. Then replacing $\{t\overline{t}, w_1\overline{w_1}\}$ with $\{qw_1\}$ gives a smaller PDS, or there is a vertex $t_1\in V\setminus S$, such that $N_S(t_1)=\{\overline{t},\overline{w_1}\}$. In the later case, $f(S_u)+f(S_v)+f(S_w)+f(S_t)\geq f(x)+f(y)+f(z)+f(\overline{w}')+f(r)+1/6+1/2+1/3=6$.} 

\textbf{Subcase 1.2:} $v$ and $\overline{v}$ have different neighbors in $V\setminus  S$. Let $z$ be the neighbor of $v\in S$, and $w$ the neighbor of $\overline{v}$.
Note that $x$ and $y$ already provide weight 2 for $S_u\cup S_v$.

If $f(vz)+f(\overline{v}w) \geq 1$, then $f(S_u)+f(S_v)\geq 3$. 
Otherwise, $w$ or $z$ has a neighbor in $D\setminus S_v$. W. o. l. g. assume $w$ has a neighbor $w_1\in D\setminus S_v$. Then replacing $ \{u\overline{u}, v\overline{v},w_1\overline{w_1}\} $ with $\{x\overline{u},ww_1\}$ gives a smaller PDS.


\textbf{Case 2:} $u$ and $\overline{u}$ do not have any common neighbor in $V\setminus  S$.

For this case, we first give the following lemma, which will be used many times.

\begin{definition}
Let $S_u$ be a solo $D$ pair, with $uy,\overline{u}x\in E(G)$. Suppose $x$ has a neighbor $w_1$ in $D$, and $y$ has a neighbor $w_2$ in $D$. If $w_1\neq w_2$ and they are not paired in $G[S]$, then we call $\{w_1,w_2\}$ a \textbf{constraint pair of $S_u$}.
\end{definition}

\begin{lemma}\label{lemma7}
If $\{w_1,w_2\}$ is a constraint pair of $S_u$, then there is a vertex $t\in V\setminus  S$, such that $N_S(t)\subseteq \{u,\overline{u},\overline{w_1},\overline{w_2}\}$.
\end{lemma}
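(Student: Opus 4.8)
The plan is to prove Lemma~\ref{lemma7} by the same ``smaller PDS or forced private neighbor'' mechanism that drives every other local argument in this section, using Lemma~\ref{lemma1} as the engine. Let $\{w_1,w_2\}$ be a constraint pair of the solo $D$ pair $S_u$, with $\overline{u}x, uy\in E(G)$, $xw_1\in E(G)$, $yw_2\in E(G)$, $w_1,w_2\in D$, $w_1\neq w_2$, and $w_1,w_2$ not paired with each other. I would take $X=\{u,\overline{u}\}\cup\{w_1,\overline{w_1}\}\cup\{w_2,\overline{w_2}\}$ (six vertices, three matching edges removed) and $Y=\{x w_1\text{-type replacement edges}\}$; concretely, the natural replacement is $E_1=\{u\overline{u}, w_1\overline{w_1}, w_2\overline{w_2}\}$ replaced by $E_2=\{xw_1, yw_2\}$. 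Since $S_u$ is a solo $D$ pair, both $u$ and $\overline{u}$ are leaves in $G[S]$, so deleting $u,\overline{u}$ from $S$ costs nothing inside $G[S]$; the edges $xw_1$ and $yw_2$ lie in $G$, are vertex-disjoint (as $w_1\neq w_2$ and $x\neq y$, the latter because $u,\overline{u}$ have no common neighbor is not needed here but $x\ne y$ follows since $x\in N(\overline u)$, $y\in N(u)$ and if $x=y$ it is a common neighbor — in Case~2 this is excluded), and they form a perfect matching on $\{x,w_1,y,w_2\}\cup\emptyset$ — wait, we must also check $\overline{w_1},\overline{w_2}$ get dominated, which is exactly what $N_S(t)$ will witness. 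So $S'=S\setminus\{u,\overline u,w_1,\overline{w_1},w_2,\overline{w_2}\}\cup\{x,y\}$ has $|S'|=|S|-4<|S|$ and $G[S']$ has a perfect matching (the old matching minus the three removed edges, plus $\{xw_1? \}$ — here I must be careful: $w_1$ is removed, so actually the new pairs are $\{x, ?\}$; the cleanest choice is $E_2=\{xw_1\}\cup\{yw_2\}$ is wrong since $w_i$ removed).

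Let me restate the replacement cleanly: remove $E_1=\{u\overline u,\ w_1\overline{w_1},\ w_2\overline{w_2}\}$ and add $E_2=\{x w_1,\ y w_2\}$ is not a matching in $G[S']$ either since $w_1,w_2\notin S'$. The correct move in the spirit of the paper's ``Replacing $E_1$ with $E_2$'' convention is: remove $u\overline u$ only, and add $\{xw_1\}$? No — $w_1$ is still in $S$. I think the intended construction is $E_1=\{u\overline u, w_1\overline{w_1}, w_2\overline{w_2}\}$, $E_2=\{x w_1', y w_2'\}$ where the new matching edges are $x\overline{w_1}$ and $y\overline{w_2}$: then $S'=(S\setminus\{u,\overline u, w_1, w_2\})\cup\{x,y\}$, with matching edges $x\overline{w_1}$ and $y\overline{w_2}$ (valid since $xw_1\in E$ and $w_1\overline{w_1}\in E$ does not give $x\overline{w_1}\in E$ — that edge need not exist).

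Given the repeated subtlety, the honest plan is: I would apply Lemma~\ref{lemma1} with $X=\{u,\overline u\}\cup V(\{w_1\overline{w_1}, w_2\overline{w_2}\})$ removed and $Y=\{x,y\}$ added along the two edges $xw_1$ and $yw_2$ reinterpreted so that $w_1,w_2$ stay and instead $\overline{w_1},\overline{w_2}$ leave, i.e. $S'=S\setminus\{u,\overline u,\overline{w_1},\overline{w_2}\}\cup\{x,y\}$ with new matching edges $xw_1\notin$ — this still fails since $xw_1\in E$ but we need the pair inside $G[S']$ which it is: $x,w_1\in S'$ and $xw_1\in E$, good; similarly $yw_2\in E$ with $y,w_2\in S'$. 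So $S'$ has the perfect matching $M\setminus\{u\overline u, w_1\overline{w_1}, w_2\overline{w_2}\}\cup\{xw_1, yw_2\}$, and $|S'|=|S|-4<|S|$. Now $S'$ dominates $X=\{u,\overline u,\overline{w_1},\overline{w_2}\}$: indeed $\overline u$ is dominated by $x$, $u$ by $y$, $\overline{w_1}$ by $w_1\in S'$, $\overline{w_2}$ by $w_2\in S'$. By Lemma~\ref{lemma1} there is $t\in V\setminus(S\cup\{x,y\})$ with $N_S(t)\subseteq X=\{u,\overline u,\overline{w_1},\overline{w_2}\}$, which is exactly the claim (and $t\notin\{x,y\}$ is fine since $x,y\in V\setminus S$ are allowed to coincide with such $t$ only if their $S$-neighborhoods already sat inside $X$, which they don't by the constraint-pair hypothesis). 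The main obstacle, and the only real content, is verifying that $S'$ is legitimately a smaller set with a perfect matching and that it dominates all of $X$ — i.e. pinning down exactly which four vertices leave $S$ and which two enter, and checking the four domination conditions; once that bookkeeping is fixed, Lemma~\ref{lemma1} delivers the conclusion immediately.
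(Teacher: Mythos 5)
Your final construction---replacing $\{u\overline{u},\,w_1\overline{w_1},\,w_2\overline{w_2}\}$ with $\{xw_1,\,yw_2\}$, so that $S'=S\setminus\{u,\overline{u},\overline{w_1},\overline{w_2}\}\cup\{x,y\}$, and invoking Lemma~\ref{lemma1} with $X=\{u,\overline{u},\overline{w_1},\overline{w_2}\}$ and $Y=\{x,y\}$---is exactly the paper's proof of Lemma~\ref{lemma7}. The only blemishes are the lengthy bookkeeping detours and the arithmetic slip that $|S'|=|S|-2$ rather than $|S|-4$, which is harmless since the argument only needs $|S'|<|S|$.
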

\begin{proof}
Observe that replacing $\{ u\overline{u},w_1\overline{w_1},w_2\overline{w_2} \}$ with $\{xw_1,yw_2 \}$ gives a smaller PDS, or there is a vertex $t\in V\setminus  S$, such that $N_S(t)\subseteq \{u,\overline{u},\overline{w_1},\overline{w_2}\}$.
\end{proof}

\begin{lemma}\label{lemma8}
Let $S_u$ be a solo $D$ pair, such that $u$ and $\overline{u}$ do not have common neighbor, and edges in $[S_u,V\setminus S]$ have weight not equal to 1/2.
If $\overline{w_1}$ or $\overline{w_2}$ already has degree 3, then  $\{w_1, w_2\}$ is not a constraint pair.
\end{lemma}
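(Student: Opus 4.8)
The plan is to argue by contradiction, assuming that $\{w_1,w_2\}$ \emph{is} a constraint pair of $S_u$. The definition of a constraint pair is symmetric under exchanging the roles of $u$ and $\overline u$ (which simultaneously swaps $x$ with $y$, hence $w_1$ with $w_2$), so it suffices to treat the case in which $\overline{w_1}$ has degree $3$ in $G[S]$. Applying Lemma~\ref{lemma7} to $\{w_1,w_2\}$ produces a vertex $t\in V\setminus S$ with $N_S(t)\subseteq\{u,\overline u,\overline{w_1},\overline{w_2}\}$, and the remainder of the argument consists in showing that no such $t$ can exist.

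First I would discard $\overline{w_1}$ from this set: since $G$ is cubic and $\overline{w_1}$ already has all three of its neighbours in $S$, it has no neighbour in $V\setminus S$, so $t\not\sim\overline{w_1}$ and $N_S(t)\subseteq\{u,\overline u,\overline{w_2}\}$. Next, because $u$ and $\overline u$ have no common neighbour in $V\setminus S$, the vertex $t$ cannot be adjacent to both $u$ and $\overline u$; together with $N_S(t)\neq\emptyset$ (as $S$ is a dominating set), this leaves exactly the possibilities $N_S(t)\in\{\,\{u\},\{\overline u\},\{\overline{w_2}\},\{u,\overline{w_2}\},\{\overline u,\overline{w_2}\}\,\}$. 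Here one should note, again using that $u$ and $\overline u$ have no common neighbour, that $w_2\notin\{u,\overline u\}$ and hence $\overline{w_2}\notin\{u,\overline u\}$, so these five sets are genuinely distinct.

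Finally I would rule out each of the five cases. For each of the three singletons, $t$ would be a private neighbour of $u$, of $\overline u$, or of $\overline{w_2}$; but $u,\overline u\in D$ and (since $w_2\in D$) also $\overline{w_2}\in D$, and by definition no vertex of a $D$ pair has a private neighbour, a contradiction. If $N_S(t)=\{u,\overline{w_2}\}$, then $N_S(t)\subseteq D$ with $|N_S(t)|=2$, so by the definition of the weight function $f(tu)=1/|N_S(t)|=1/2$; since $tu\in[S_u,V\setminus S]$, this contradicts the hypothesis that every edge of $[S_u,V\setminus S]$ has weight different from $1/2$. The case $N_S(t)=\{\overline u,\overline{w_2}\}$ is identical with $\overline u$ in the role of $u$. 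As every case is impossible, Lemma~\ref{lemma7} is contradicted, and therefore $\{w_1,w_2\}$ is not a constraint pair.

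I do not expect a genuine obstacle here: once Lemma~\ref{lemma7} is invoked the argument is a short finite case check. The only points that need care are the reduction via the symmetry between $u$ and $\overline u$ to the case in which $\overline{w_1}$ (rather than $\overline{w_2}$) has degree $3$, the verification that the five candidate neighbourhoods of $t$ are distinct, and confirming that the edge to which the weight bound is applied genuinely lies in $[S_u,V\setminus S]$ so that the weight-$1/2$ hypothesis is available.
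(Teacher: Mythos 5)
Your proof is correct and follows essentially the same route as the paper's: invoke Lemma~\ref{lemma7} to get $t$ with $N_S(t)\subseteq\{u,\overline{u},\overline{w_1},\overline{w_2}\}$, discard $\overline{w_1}$ because its degree is exhausted, use the no-common-neighbour hypothesis, rule out the two-element sets via the weight-$\neq 1/2$ hypothesis, and rule out the singletons because vertices of $D$ have no private neighbours. Your explicit symmetry reduction and enumeration of the five candidate neighbourhoods only make explicit what the paper leaves implicit.
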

\begin{proof}
Suppose on the contrary, $\{w_1, w_2\}$ is a constraint pair. Then there is a vertex $t\in V\setminus S$, such that $N_S(t)\subseteq \{ u,\overline{u}, \overline{w_1}, \overline{w_2} \}$. If $\overline{w_1}$ already has degree 3, then $N_S(t)\subseteq \{u,\overline{u},\overline{w_2}\}$. But $u$ and $\overline{u}$ have no common neighbor, it is only possible that $N_S(t)\subseteq \{u,\overline{w_2}\}$ or $N_S(t)\subseteq \{\overline{u},\overline{w_2}\}$.
As edges in $[S_u,V\setminus S]$ have weight not equal 1/2, it is not possible that $N_S(t)= \{u,\overline{w_2}\}$ or $N_S(t)=\{\overline{u},\overline{w_2}\}$. Moreover, $t$ is not a private neighbor of any vertex in $\{u,\overline{u},\overline{w_2}\}$, which is a subset of $D$. Above all, it is not possible for the existence of $t$. The argument also works for the case when $\overline{w_2}$ already has degree 3. According to Lemma~\ref{lemma7}, $\{w_1, w_2\}$ can not be a constraint pair.
\end{proof}

In the remaining part of this section, we give a detailed discussion of the two possible  cases that $f(S_u)<3/2$ when $u$ and $\overline{u}$ have no common neighbor.

\subsubsection{Three 1/3 and one 5/12}
Suppose $u$ has two neighbors $y$ and $z$ such that $f(uy)=5/12$, $f(uz)=1/3$, and $\overline{u}$ has two neighbors $x$ and $w$, such that $f(\overline{u}x)=f(\overline{u}w)=1/3$.

\textbf{Case 1:} $N_S(x)=\{\overline{u},w_1,\overline{w_1}\}$, where $w_1$ and $\overline{w_1}$ are paired in $S$.

Now we discuss according to whether $y$ and $x$ have same neighbor in $D$.

\textbf{Subcase 1.1:} $N_D(y)=\{u, w_2\}$ with $w_2\not\in \{w_1,\overline{w_1}\}$.

Both $\{w_1,w_2\}$ and $\{\overline{w_1},w_2\}$ are constraint pairs of $S_u$,
thus according to Lemma~\ref{lemma7}, there are two vertices $t_1,t_2\in V\setminus  S$, such that $N_S(t_1)\subseteq \{u,\overline{u},\overline{w_1},\overline{w_2}\}$ and $N_S(t_2)\subseteq  \{u,\overline{u},w_1,\overline{w_2}\}$.

Because $u$ and $\overline{u}$ have no common neighbor, there are three possibilities of  $N_S(t_1)$: $N_S(t_1)=\{\overline{u},\overline{w_1},\overline{w_2}\}$, $N_S(t_1)=\{u,\overline{w_1},\overline{w_2}\}$ or $N_S(t_1)=\{\overline{w_1},\overline{w_2}\}$.

If $N_S(t_1)=\{u,\overline{w_1},\overline{w_2}\}$, then $N_S(t_2)= \{ \overline{u},w_1,\overline{w_2}\}$ or
$N_S(t_2)= \{w_1,\overline{w_2}\}$. If $N_S(t_2)=  \{\overline{u},w_1,\overline{w_2}\}$, then replacing $\{ u\overline{u}, w_1\overline{w_1}, w_2\overline{w_2}\}$ with $\{ t_1\overline{w_1},\overline{w_2}t_2\}$ gives a smaller PDS. If  $N_S(t_2)= \{w_1,\overline{w_2}\}$, then let $w$ be the other neighbor of $\overline{u}$, which has a neighbor $w_3\neq w_2$ in $D$. Then $\{w_2,w_3\}$ is a constraint pair of $S_u$, which is not possible according to Lemma~\ref{lemma8}, as $\overline{w_2}$ already has degree 3.

If $N_S(t_1)=\{\overline{u},\overline{w_1},\overline{w_2}\}$, then $N_S(t_2)=  \{u,w_1,\overline{w_2}\}$ or
$N_S(t_2)= \{w_1,\overline{w_2}\}$.  The case that $N_S(t_2)=  \{u,w_1,\overline{w_2}\}$ has been discussed above. If $N_S(t_2)= \{w_1,\overline{w_2}\}$, then let $w_3$ be the other $D$ neighbor of $z$.  Then $\{w_1,w_3\}$ is a constraint pair of $S_u$, which is not possible according to Lemma~\ref{lemma8}, as $\overline{w_1}$ already has degree 3.

If $N_S(t_1)=\{\overline{w_1},\overline{w_2}\}$, then there are three possibilities of  $N_S(t_2)$: $N_S(t_2)=\{\overline{u},w_1,\overline{w_2}\}$, $N_S(t_1)=\{u,w_1,\overline{w_2}\}$ or $N_S(t_2)=\{w_1,\overline{w_2}\}$. We just need to consider the case that  $N_S(t_2)=\{w_1,\overline{w_2}\}$ as the other two cases have been discussed above. Let $w_3\neq w_2$  be a $D$ neighbor of $z$. Then $\{w_1,w_3\}$ is a constraint pair of $S_u$, which is not possible according to Lemma~\ref{lemma8}, as $\overline{w_1}$ already has degree 3.

\textbf{Subcase 1.2:} $N_D(y)=\{u, w_2\}$ with $w_2\in \{w_1,\overline{w_1}\}$.

In this case, if $w$ has two $D$ neighbors that are paired with each other, then $y$ does not have any common neighbor with $w$, and the discussion of Subcase 1.1 applies.

Consider the case  $N_S(w)=\{\overline{u}, w_3,w_4\}$, where $w_3$ and $w_4$ are not paired in $S$. W. o. l. g. suppose $yw_1\in E(G)$.

If $\overline{w_1}\in \{w_3,w_4\}$, say $\overline{w_1}=w_3$, then $\{w_1,w_4\}$ is a constraint pair of $S_u$, which is not possible according to Lemma~\ref{lemma8}, as $\overline{w_1}$ already has degree 3. Otherwise, both $\{w_1, w_3\}$ and $\{w_1, w_4\}$ are constraint pairs of $S_u$, which is not possible according to Lemma~\ref{lemma8}, as $\overline{w_1}$ already has degree 2.

\textbf{Case 2:} The other two neighbors of $x$, $w$ are not paired in $S$.

Suppose $N_D(x)=\{\overline{u},w_1, w_2\}$, $N_D(y)=\{u,w_3\}$.
 Then either $w_3\not\in S_{w_1}$ or $w_3\not\in S_{w_2}$.

W.o.l.g., assume $w_3\not\in S_{w_1}$, then according to Lemma~\ref{lemma7}, there is a vertex $t_1\in V\setminus  S$, such that $N_S(t_1)\subseteq\{u,\overline{u},\overline{w_1},\overline{w_3}\}$.

Because $u$ and $\overline{u}$ do not have common  neighbor, there are three possibilities of  $N_S(t_1)$: $N_S(t_1)=\{u,\overline{w_1},\overline{w_3}\}$, $N_S(t_1)=\{\overline{u},\overline{w_1},\overline{w_3}\}$, or $N_S(t_1)=\{\overline{w_1},\overline{w_3}\}$.

If $N_S(t_1)=\{u,\overline{w_1},\overline{w_3}\}$, then according to Lemma~\ref{lemma7}, there is a vertex $t_2\in V\setminus  S$, such that $N_S(t_2)\subseteq\{\overline{u},\overline{w_1},w_3\}$. If $N_S(t_2)=\{\overline{u},\overline{w_1},w_3\}$, then replacing  $\{ u\overline{u}, w_1\overline{w_1},w_3\overline{w_3}  \}$   with $\{  t_1\overline{w_3},\overline{w_1}t_2\}$ gives a smaller PDS.
If $N_S(t_2)=\{\overline{w_1},w_3\}$, then $w$ or $x$ has a $D$ neighbor $w_4\not\in S_{w_3}\cup S_{w_1}$, such that $\{w_4,\overline{w_3}\}$ is a constraint pair of $S_u$, which is not possible according to Lemma~\ref{lemma8}, as $w_3$ already has degree 3.

If $N_S(t_1)=\{\overline{u},\overline{w_1},\overline{w_3}\}$, then according to Lemma~\ref{lemma7}, there is a vertex $t_2\in V\setminus  S$, such that $N_S(t_2)\subseteq\{u,w_1,\overline{w_3}\}$. If $N_S(t_2)=\{u,w_1,\overline{w_3}\}$, then $\{\overline{w_1},\overline{w_3}\}$ is a constraint pair, which is not possible according to Lemma~\ref{lemma8}, as $w_1$ already has degree 3. If $N_S(t_2)=\{w_1,\overline{w_3}\}$, then $z$ has a $D$ neighbor $w_4\not\in S_{w_1}$. In this case, $\{\overline{w_1},w_4\}$ is a constraint pair of $S_u$, which is not possible, as $w_1$ already has degree 3.

If $N_S(t_1)=\{\overline{w_1},\overline{w_3}\}$, then look at the other $D$ neighbor of $x$.

If $w_2=w_3$, $z\overline{w_3}\in E(G)$(or $w\overline{w_3}\in E(G)$), then $z$(or $w$) has a $D$ neighbor $w_4\neq w_1$, such that $\{w_3,w_4\}$ is a constraint pair, which is not possible according to Lemma~\ref{lemma8}, as $\overline{w_3}$ already has degree 3. Otherwise, $z\overline{w_3}\not\in E(G)$ and $w\overline{w_3}\not \in E(G)$, then $z$ and $w$ together have at least two neighbors $w_4$ and $w_5$ in $D\setminus \{w_1\}$, such that both $\{w_3,w_4\}$ and $\{w_3,w_5\}$ are constraint pairs, which is not possible according to Lemma~\ref{lemma8}, as $\overline{w_3}$ already has degree 2.

If $w_2=\overline{w_3}$, then look at the neighbor of $w$. If $w$ has a $D$ neighbor $w_4\not\in \{w_1, w_3\}$, then $\{w_3,w_4\}$ is a constraint pair, which is not possible according to Lemma~\ref{lemma8}, as $\overline{w_3}$ already has degree 3. Otherwise, $ww_1\in E(G)$ and $ww_3\in E(G)$. Then $z$ has a $D$ neighbor $w_5$, which forms a constraint pair with $w_3$, which is not possible according to Lemma~\ref{lemma8}, as $\overline{w_3}$ already has degree 3.

Otherwise, $\{w_2,w_3\}$ is a constraint pair, and there is a vertex $t_2\in V\setminus S$, such that $N_S(t_2)\subseteq \{u,\overline{u},\overline{w_2},\overline{w_3}\}$. There are three possibilities of  $N_S(t_2)$: $N_S(t_2)= \{u,\overline{w_2},\overline{w_3}\}$, $N_S(t_2)= \{\overline{u},\overline{w_2},\overline{w_3}\}$ or $N_S(t_2)= \{\overline{w_2},\overline{w_3}\}$.

If $N_S(t_2)= \{u,\overline{w_2},\overline{w_3}\}$, then both $\{ w_1, \overline{w_2}\}$ and $\{w_1, \overline{w_3} \}$ are constraint pairs of $S_u$, which is not possible according to Lemma~\ref{lemma8}, as $\overline{w_1}$ already has degree 2.

If $N_S(t_2)= \{\overline{u},\overline{w_2},\overline{w_3}\}$, then $\{\overline{w_2},w_3\}$ is a constraint pair of $S_u$, which is not possible according to Lemma~\ref{lemma8}, as $\overline{w_3}$ already has degree 3.

If $N_S(t_2)= \{\overline{w_2},\overline{w_3}\}$, then look at the $D$ neighbor of $w$. If $w$ has a $D$ neighbor $w_4\not\in \{w_1, w_2,w_3\}$, then $\{w_3,w_4\}$ is a constraint pair of $S_u$, which is not possible according to Lemma~\ref{lemma8}, as $\overline{w_3}$ already has degree 3. Otherwise, $N_D(w)\subseteq \{w_1,w_2,w_3\}\cup \{\overline{u}\}$. In this case, if $ww_3\in E(G)$, then $z$ has a $D$ neighbor $w_5 \not\in \{w_1,w_2\}$, which forms a constraint pair with $w_3$, which is not possible according to Lemma~\ref{lemma8}, as $\overline{w_3}$ already has degree 3. Otherwise, $N_D(w)=\{\overline{u},w_1,w_2\}$. In this case, replacing $\{u\overline{u},w_1\overline{w_1},w_3\overline{w_3}\}$ with $\{uy,t_1\overline{w_1}\}$ gives a smaller PDS.

\subsubsection{Four 1/3}

\textbf{Case 1:}$N_D(x)=\{\overline{u},w_1,\overline{w_1}\}$, where $w_1$ and $\overline{w_1}$  are paired in $G[S]$. 


\textbf{Subcase 1.1:} $y$ has a $D$ neighbor $w_2\not\in S_{w_1}$.

In this case, $\{\overline{w_1},\overline{w_2}\}$ is a constraint pair of $S_u$, thus according to Lemma~\ref{lemma7}, there is a vertex $t_1\in V\setminus S$, such that $N_S(t_1)\subseteq \{u,\overline{u},\overline{w_1},\overline{w_2}\}$. As $u$ and $\overline{u}$ do not have common neighbor, and all edges between $S_u$ and $V\setminus S$ have weight 1/3, there are three possibilities of  $N_S(t_1)$: $N_S(t_1)= \{u,\overline{w_1},\overline{w_2}\}$, $N_S(t_1)= \{\overline{u},\overline{w_1},\overline{w_2}\}$, or $N_S(t_1)= \{\overline{w_1},\overline{w_2}\}$.

If $N_S(t_1)= \{u,\overline{w_1},\overline{w_2}\}$, then $\{w_1,\overline{w_2}\}$ is also a constraint pair of $S_u$, which is not possible, according to Lemma~\ref{lemma8}, as $\overline{w_1}$ already has degree 3.


If $N_S(t_1)= \{\overline{u},\overline{w_1},\overline{w_2}\}$, then $\{\overline{w_1},w_2\}$ is also a constraint pair of $S_u$. Thus according to Lemma~\ref{lemma7}, there is a vertex $t_2\in V\setminus S$, such that $N_S(t_2)\subseteq \{ w_1, u,\overline{w_2}\}$. There are two possibilities of  $N_S(t_2)$: $N_S(t_2)= \{ w_1, u,\overline{w_2}\}$, or $N_S(t_2)= \{ w_1, \overline{w_2}\}$. If $N_S(t_2)= \{ w_1, u,\overline{w_2}\}$, then $\{w_1,\overline{w_2}\}$ is a constraint pair, which is not possible, as $\overline{w_1}$ already has degree 3. 
If $N_S(t_2)= \{ w_1, \overline{w_2}\}$, then as $w_1$ and $\overline{w_1}$ already has degree 3, $z$ has a $D$ neighbor $w_3\not\in S_{w_1}\cup S_{w_2}$. Thus $\{w_1, w_3\}$ is a constraint pair of $S_u$, 
which is not possible according to Lemma~\ref{lemma8}, as $\overline{w_1}$ already has degree 3.

If $N_S(t_1)= \{\overline{w_1},\overline{w_2}\}$, then $\{\overline{w_1},w_2\}$ is also a constraint pair of $S_u$. Thus according to Lemma~\ref{lemma7}, there is a vertex $t_2\in V\setminus S$, such that $N_S(t_2)\subseteq \{ w_1, u,\overline{u},\overline{w_2}\}$. There are three possibilities of  $N_S(t_2)$: $N_S(t_2)= \{u,w_1,\overline{w_2}\}$, $N_S(t_2)= \{\overline{u},w_1,\overline{w_2}\}$, or $N_S(t_2)= \{w_1,\overline{w_2}\}$.
The cases of $N_S(t_2)= \{u,w_1,\overline{w_2}\}$ and $N_S(t_2)= \{\overline{u},w_1,\overline{w_2}\}$ have been discussed above. If $N_S(t_2)= \{w_1,\overline{w_2}\}$, then as $w_1$ and $\overline{w_1}$ already has degree 3, $z$ has a $D$ neighbor $w_3\not\in S_{w_1}\cup S_{w_2}$.
Thus $\{w_1, w_3\}$ is a constraint pair of $S_u$, which is not possible according to Lemma~\ref{lemma8}, as $\overline{w_1}$ already has degree 3.

\textbf{Subcase 1.2:} If $N_D(y)=\{u\}\cup S_{w_1}$.

In this case, $w$ has a neighbor $w_2$ not in $S_{w_1}$, as both vertices in $S_{w_1}$ already has degree 3.
Thus, $\{w_1,w_2\}$ is a constraint pair of $S_u$, which is not possible according to Lemma~\ref{lemma8}, as $\overline{w_1}$ already has degree 3.

\textbf{Case 2:} none of the two neighbors in $D\setminus S_u$ of $x,y,z$ and $w$ are paired in $S$.

Let $N_S(x)\setminus\{\overline{u}\}=\{w_1,w_2\}$, $N_S(y)\setminus\{u\}=\{w_3,w_4\}$.

As $w_3$ and $w_4$ are not paired in $G[S]$, at least one of them is not in $S_{w_1}$. W. o. l. g., assume $w_3\not\in S_{w_1}$, then $\{w_1,w_3\}$ is a constraint pair of $S_u$. Thus there is a vertex $t_1\in V\setminus S$, such that $N_S(t_1)\subseteq \{u,\overline{u}, \overline{w_1}, \overline{w_3}\}$. As $u$ and $\overline{u}$ have no common neighbor, there are three possibilities of  $N_S(t_1)$: $N_S(t_1)=\{u, \overline{w_1}, \overline{w_3}\}$, $N_S(t_1)=\{\overline{u}, \overline{w_1}, \overline{w_3}\}$, or $N_S(t_1)=\{\overline{w_1}, \overline{w_3}\}$.

If $N_S(t_1)=\{u,\overline{w_1}, \overline{w_3}\}$, then $\{w_1,\overline{w_3}\}$ is also a constraint pair of $S_u$. Thus, there is a vertex $t_2\in V\setminus S$, such that $N_S(t_2)\subseteq \{\overline{u}, \overline{w_1}, w_3\}$. There are two possibilities of  $N_S(t_2)$: $N_S(t_2)= \{\overline{u}, \overline{w_1}, w_3\}$ or $N_S(t_2)= \{\overline{w_1}, w_3\}$. 
If $N_S(t_2)= \{\overline{u}, \overline{w_1}, w_3\}$, then look at $y$ and $w_1$. If $yw_1\not\in E(G)$, then  $w_4\not\in S_{w_1}\cup S_{w_3}$. Thus $\{w_1,w_4\}$ is a constraint pair, which is not possible according to Lemma~\ref{lemma8}, as $\overline{w_1}$ already has degree 3.
Thus $yw_1\in E(G)$. In this case, if $x\overline{w_3}\not\in E(G)$, then $w_2\not\in S_{w_3}$. And so $\{w_2,\overline{w_3}\}$ is a constraint pair, which is not possible as $w_3$ already has degree 3.  Therefore, $x\overline{w_3},yw_1\in E(G)$, then the graph is the \textbf{Petersen Graph}. If $N_S(t_2)= \{\overline{w_1}, w_3\}$, then either $w_2\neq \overline{w_3}$ or $w$ has a $D$ neighbor $w_5\neq \overline{w_3}$. Therefore $\overline{w_3}$ forms a constraint pair with $w_2$ or $w_5$, which is not possible, as $w_3$ already has degree 3.

If $N_S(t_1)=\{\overline{u}, \overline{w_1}, \overline{w_3}\}$, then $\{\overline{w_1},w_3\}$ is also a constraint pair. Thus there is a vertex $t_2\in V\setminus S$, such that $N_S(t_2)\subseteq \{u, w_1, \overline{w_3}\}$. There are two possibilities of  $N_S(t_2)$: $N_S(t_2)= \{u, w_1, \overline{w_3}\}$ or $N_S(t_2)= \{w_1, \overline{w_3}\}$. If $N_S(t_2)= \{u, w_1, \overline{w_3}\}$, then $\{\overline{w_1},\overline{w_3} \}$ is a constraint pair, which is not possible according to Lemma~\ref{lemma8}, as $w_1$ already has degree 3. If $N_S(t_2)= \{w_1, \overline{w_3}\}$, and $w_4=\overline{w_1}$, then $\{\overline{w_1},w_2\}$ is a constraint pair, which is not according to Lemma~\ref{lemma8}, as $w_1$ already has degree 3. Otherwise, $N_S(t_2)= \{w_1, \overline{w_3}\}$, and $w_4\not\in S_{w_1}\cup S_{w_3}$. Then $\{\overline{w_1},w_4\}$ is a constraint pair, which is not possible according to Lemma~\ref{lemma8}, as $w_1$ already has degree 3.

If $N_S(t_1)=\{\overline{w_1}, \overline{w_3}\}$, and $xw_3\in E(G)$, i.e. $w_2=w_3$, then there is a $D$ neighbor $w_5$ of $z$, such that $w_5\not\in S_{w_3}$, as the two neighbors of $z$ are not paired in $G[S]$. In this case, $\{w_3,w_5\}$ is a constraint pair, thus there is a vertex $t_2\in V\setminus S$, such that $N_S(t_2)\subseteq \{ \overline{u}, \overline{w_3}, \overline{w_5} \}$. There are two possibilities of  $N_S(t_2)$: $N_S(t_2)= \{ \overline{u}, \overline{w_3}, \overline{w_5} \}$ or $N_S(t_2)= \{ \overline{w_3}, \overline{w_5} \}$. If $N_S(t_2)= \{ \overline{u}, \overline{w_3}, \overline{w_5} \}$, then $\{\overline{w_3}, w_5\}$ is a constraint pair, which is not possible according to Lemma~\ref{lemma8}, as $w_3$ already has degree 3. If $N_S(t_2)= \{ \overline{w_3}, \overline{w_5} \}$, then $z$ has a neighbor $w_6\not\in S_{w_3}\cup S_{w_5}$, and $\{w_3, w_6\}$ is a constraint pair of $S_u$, which is not possible according to Lemma~\ref{lemma8}, as $\overline{w_3}$ already has degree 3.

Otherwise, $N_S(t_1)=\{\overline{w_1}, \overline{w_3}\}$ and $xw_3\not\in E(G)$. If $x\overline{w_3}\in E(G)$, i.e. $w_2=\overline{w_3}$,  then  $w$ has a neighbor $w_7$ not in $S_{w_3}$. In this case,  $\{w_3,w_7\}$ is a constraint pair, which is not possible according to Lemma~\ref{lemma8}, as $\overline{w_3}$ already has degree 3. Otherwise, $w_2\not\in S_{w_3}$. Thus $\{w_2, w_3\}$ is a constraint pair of $S_u$. According to Lemma~\ref{lemma7}, there is a vertex $t_2\in V\setminus S$, such that $N_S(t_2)\subseteq \{u,\overline{u}, \overline{w_2},\overline{w_3}\}$. There are three possibilities of  $N_S(t_2)$: $N_S(t_2)= \{u, \overline{w_2},\overline{w_3}\}$, $N_S(t_2)= \{\overline{u}, \overline{w_2},\overline{w_3}\}$, or $N_S(t_2)= \{ \overline{w_2},\overline{w_3}\}$.
If $N_S(t_2)= \{u, \overline{w_2},\overline{w_3}\}$, then both $\{w_1,\overline{w_3}\}$ and $\{w_1,\overline{w_2}\}$ are constraint pairs of $S_u$, which is not possible, as $\overline{w_1}$ already has degree 2.
If $N_S(t_2)= \{\overline{u}, \overline{w_2},\overline{w_3}\}$, then $\{\overline{w_2},w_3\}$ is a constraint pair, which is not possible according to Lemma~\ref{lemma8}, as $\overline{w_3}$ already has degree 3.
If $N_S(t_2)= \{ \overline{w_2},\overline{w_3}\}$, then $ww_1\in E(G)$ and $ww_2\in E(G)$. Otherwise, $w$ has a neighbor $w_5\in D$, such that $\{w_5,w_3\}$ is a constraint pair of $S_u$, which is not possible according to Lemma~\ref{lemma8}, as $\overline{w_3}$ already has degree 3. Thus consider the case when $N_S(t_1)=\{\overline{w_1}, \overline{w_3}\},N_S(t_2)= \{ \overline{w_2},\overline{w_3}\}$ and $ww_1\in E(G)$ and $ww_2\in E(G)$. Look at the last $D$ neighbor $z$ of $u$. Symmetrically, $y$ and $z$ also have same neighborhood $\{u, w_3, w_4\}$. Thus there are two vertices $t_3,t_4 \in V\setminus S$, such that
$N_S(t_3)=\{\overline{w_1},\overline{w_4}\}, N_S(t_4)=\{\overline{w_2},\overline{w_4}\}$. In this case, replacing $\{w_1\overline{w_1},w_2\overline{w_2},w_3\overline{w_3},w_4\overline{w_4}\}$ with $\{ t_3\overline{w_1}, t_2\overline{w_2},yw_3 \} $ gives a smaller PDS, a contradiction.

Above all, we have proved that if $G$ is not the Petersen Graph, then each component in $G[S]$ has enough total weight, such that averagely each pair in $S$ has weight at least 3/2. Thus Theorem~\ref{thm7} is correct.

\section{Conclusion}
In this paper, we study the paired domination problem in cubic graphs. We show that, except the Petersen Graph, $\gamma_{pr}(G)\leq 4n/7$ holds for all cubic graphs. Our result confirms the remaining case of Conjecture 2 for regular graphs.
As our result does not require the graph to be claw-free, it is a promising step towards proving Conjecture 2 completely.

\section{Acknowledgement}
Bin Sheng was supported by National Natural Science Foundation of China (No. 61802178).  Changhong Lu was supported by National Natural Science Foundation of China (No. 11871222) and Science and Technology Commission of
Shanghai Municipality (No. 18dz2271000, 19jc1420100).

\bibliographystyle{alpha}
\bibliography{references}
\end{document}